\DeclareMathOperator{\tr}{tr}
\DeclareMathOperator{\Diag}{Diag}
\DeclareMathOperator{\diag}{diag}
\DeclareMathOperator{\conv}{conv}
\DeclareMathOperator{\STAB}{STAB}
\DeclareMathAlphabet{\mymathbb}{U}{BOONDOX-ds}{m}{n} 
\newtheorem{thm}{Theorem}
\newtheorem{lem}[thm]{Lemma}
\newtheorem{cor}[thm]{Corollary}
\newtheorem{prop}[thm]{Proposition}
\newtheorem{obs}[thm]{Observation}
\newcommand{\R}{\mathbb{R}}
\begin{document}

\title{The exact subgraph hierarchy and its vertex-transitive variant for the stable set problem for Paley graphs}
\author{
	{Elisabeth Gaar}\thanks{University of Augsburg, Germany and Johannes Kepler University Linz, Austria, {\tt elisabeth.gaar@uni-a.de}}
	\and  {Dunja Pucher}\thanks{University of Klagenfurt, Austria,
		{\tt dunja.pucher@aau.at} }}
	
\date{}

\maketitle

\begin{abstract}
The stability number of a graph, defined as the cardinality of the largest set of pairwise non-adjacent vertices, is NP-hard to compute. The exact subgraph hierarchy (ESH) provides a sequence of increasingly tighter upper bounds on the stability number, starting with the Lovász theta function at the first level and including all exact subgraph constraints of subgraphs of order~$k$ into the semidefinite program to compute the Lovász theta function at level~$k$.  

In this paper, we investigate the ESH for Paley graphs, a class of strongly regular, vertex-transitive graphs. We show that for Paley graphs, the bounds obtained from the ESH remain the Lovász theta function up to a certain threshold level, i.e., the bounds of the ESH do not improve up to a certain level.

To overcome this limitation, we introduce the vertex-transitive ESH for the stable set problem for vertex-transitive graphs such as Paley graphs. We prove that this new hierarchy provides upper bounds on the stability number of vertex-transitive graphs that are at least as tight as those obtained from the ESH. Additionally, our computational experiments reveal that the vertex-transitive ESH produces superior bounds compared to the ESH for Paley graphs.

\small \textbf{Keywords:} 
Stability number, semidefinite programming, Paley graphs

\end{abstract}


\section{Introduction}

This paper deals with the stable set problem, a fundamental combinatorial optimization problem. Given an undirected simple graph $G = (V, E)$ with $\vert V \vert = n$ vertices and $\vert E \vert = m$ edges, a subset of vertices $S \subseteq V$ is called a stable set if no two vertices of $S$ are adjacent. A stable set of largest possible cardinality in $G$ is called maximum stable set, and its cardinality is denoted by $\alpha(G)$. The stable set problem asks to determine $\alpha(G)$. It is an NP-hard problem contained in Karp's list~\cite{Karp72} from 1972. Therefore, unless P = NP, there is no polynomial time algorithm to solve the stable set problem.

A method that can be employed for solving the stable set problem is branch-and-bound, which strongly depends on good upper bounds. 
One possible upper bound on the stability number of a graph is given by the Lovász theta function~\cite{Lov:79}, which was introduced in 1979 and which can be computed as the optimal objective function value of a semidefinite program (SDP). Since its introduction, the Lovász theta function as well as its strengthenings have been extensively studied. One of the first refinements is the one proposed by Schrijver~\cite{Schrijver}. The Lovász theta function can also be tightened using a hierarchical approach. Such a hierarchy is a systematic procedure to strengthen a relaxation by adding additional variables or constraints, and, usually starting from the Lovász theta function at the first level of the hierarchy, it involves multiple levels, each providing a tighter relaxation than the previous one. The most prominent hierarchies based on semidefinite programming (SDP) are the ones from Sherali and Adams~\cite{SheraliAdamas}, Lovász and Schrijver~\cite{LovSch} and Lasserre~\cite{Lasserre}. Laurent investigated the computational application of these hierarchies to the stable set polytope in~\cite{Laurent}. 

In~\cite{AARW:15}, Adams, Anjos, Rendl, and Wiegele proposed the exact subgraph hierarchy for classes of NP-hard problems that are based on graphs and for which the projection of the problem onto a subgraph shares the same structure as the original problem. In particular, they considered the Max-Cut and the stable set problems. The approach from~\cite{AARW:15} for the latter starts by computing the Lovász theta function at the first level of the hierarchy. At level~$k$ of the hierarchy it considers all subsets of $k$ vertices and requires the exact subgraph constraint to be fulfilled, a constraint that ensures that for the corresponding subgraph the solution is exact. Computational studies of the exact subgraph hierarchy for the stable set, the Max-Cut, and graph coloring problems were performed by Gaar~\cite{Gaa:18} and by Gaar and Rendl~\cite{Gaa:20}.

The results for the stable set problem presented in~\cite{Gaa:18} reveal that for some graphs, higher levels of the hierarchy do not necessarily yield tighter bounds on their stability numbers than the ones obtained at the first level of the hierarchy. One such example is the Paley graph on $61$ vertices, for which an improvement of the bound was realized only on the sixth level of the hierarchy. On the other hand, for certain graphs, a significant improvement of the bounds on their stability numbers was achieved already on the second level of the hierarchy. A question that naturally arises and that we address in this paper is whether, for all Paley graphs, there is no improvement of the bound on the stability number up to a certain level of the exact subgraph hierarchy and, if so, whether there is a theoretical explanation for it.

So far, the stability numbers of Paley graphs have gained some attention in the literature. Several authors investigated closed-form bounds on stability numbers of Paley graphs already a long time ago, for example, Broere, Döman, and Ridley~\cite {Broere} and Cohen~\cite{Cohen}.
The best closed-form upper bound for a certain class of Paley graphs was given only recently by Hanson and Petridis~\cite{Hanson}. Upper SDP bounds on the stability numbers of Paley graphs based on block-diagonal SDP hierarchies for 0/1 programming were investigated by Gvozdenovic, Laurent, and Vallentin~\cite{Gvozdenovic2009}. Furthermore, Magsino, Mixon, and Parshall~\cite{Magsino} argued that one can obtain bounds on the stability number of Paley graphs by considering certain subgraphs only. In particular, they showed that the Schrijver refinement of the Lovász theta function applied on these subgraphs yields bounds that are sometimes better than the ones proposed in~\cite{Hanson}.
In this paper, we generalize the approach of~\cite{Magsino} for all vertex-transitive graphs, which eventually leads us to the introduction of a new hierarchy for their stable set problem.

\textbf{Contribution and outline } 
In this paper, we consider upper bounds on the stability numbers of Paley graphs $P_q$ of order $q$ based on exact subgraph constraints from two different aspects. First, we systematically investigate various levels of the exact subgraph hierarchy for Paley graphs, starting with the construction of an optimal solution for the SDP to compute the first level---the Lovász theta function. Then, we examine whether the upper bounds for the stability numbers of Paley graphs on higher levels of the exact subgraph hierarchy improve. In particular, we identify a specific level~$\ell(q) = \left\lfloor\frac{\sqrt{q}+3}{2}\right\rfloor$ for which adding exact subgraph constraints on subgraphs of orders ${2, \ldots, \ell(q)}$ fails to provide better bounds on the stability numbers for the Paley graph $P_q$. Moreover, for certain Paley graphs, we prove that adding exact subgraph constraints on subgraphs of order~$\ell(q) + 1$ also fails to improve the Lovász theta function as an upper bound on the stability number. As a consequence, the exact subgraph hierarchy does not perform well at providing upper bounds on the stability number of a Paley graph $P_q$, as it only has the potential to improve the bound starting from level~$\ell(q)+1$ or even $\ell(q) + 2$.

Second, to address this limitation, we present an alternative approach for the computation of upper bounds on the stability numbers of Paley graphs. For this purpose, we introduce the vertex-transitive exact subgraph hierarchy for the stable set problem for vertex-transitive graphs, which includes Paley graphs. To do so, we fix one vertex of the graph $G$ to be in a maximum stable set and apply the exact subgraph hierarchy to the local graph of $G$, i.e., to the subgraph of $G$ in which the remaining vertices of the maximum stable set can be. We prove that for any vertex-transitive graph, the bounds obtained from the vertex-transitive exact subgraph hierarchy are at least as good as those from the exact subgraph hierarchy. In our computational study, we demonstrate that this new hierarchy yields bounds on the stability numbers of Paley graphs that are significantly better than the bounds obtained from the exact subgraph hierarchy and that are even tighter than the ones proposed in~\cite{Hanson} and~\cite{Magsino}. Specifically, our computations reveal that for every Paley graph considered, the upper bound on the stability number at the second level of the vertex-transitive exact subgraph hierarchy is significantly tighter than at the first (and consequently $\ell(q)$-th) level of the exact subgraph hierarchy. This demonstrates that the vertex-transitive exact subgraph hierarchy effectively overcomes the issue of stagnant upper bounds for multiple levels of the exact subgraph hierarchy.

The rest of this paper is organized as follows. 
In Section~\ref{Prerequisites}, we give an overview of the theoretical concepts used in this work, including Paley graphs, the Lovász theta function, and the exact subgraph hierarchy. In Section~\ref{section_ESH_Paley}, we investigate various levels of the exact subgraph hierarchy for Paley graphs that do not yield improved upper bounds on their stability numbers. In Section~\ref{section_ESH_local}, we introduce the vertex-transitive exact subgraph hierarchy for the stable set problem for vertex-transitive graphs and investigate its effectiveness in finding upper bounds on stability numbers of Paley graphs. We conclude with a short discussion of our results and open questions in Section~\ref{Conclusion}. 

\textbf{Notation and terminology } We close this section with some notation and terminology. 
We denote by $\mathbb{N}_0$ the natural numbers including zero. The vector of all-ones of length $n$ is denoted by $\mathds{1}_n$ and we write $\mathds{1}_{n \times n} =\mathds{1}_n \mathds{1}^T_n$ for the matrix of all-ones. Let $0_n$ be the zero matrix and $I_n$ the identity matrix of order~$n$. We denote by $e_i$ the column $i$ of the matrix $I_n$. Furthermore, we set
\begin{align*}
	E_i &= e_i e_i^T \\
	E_{ij} &= (e_i + e_j)(e_i + e_j)^T. 
\end{align*}
If $a \in \mathbb{R}^n$, then $\Diag(a)$ denotes a $n \times n$ diagonal matrix with $a$ on the main diagonal, while if $X \in \mathbb{R}^{n \times n}$, then $\diag(X)$ is a vector of length $n$ containing the main diagonal of $X$.

For basic graph theoretical concepts, we use the notion of graphs, the order, the complement graph $\overline{G}$ of a graph $G$, induced subgraphs, graph isomorphisms, as well as neighbors and the set of neighbors $N(v)$ of a vertex~$v$ of Diestel~\cite{Diestel}. We recall that a clique is a subset of vertices such that every two distinct vertices in the clique are adjacent. The cardinality of a maximum clique in $G$ is denoted by $\omega(G)$. Since a stable set in $G$ is a clique in $\overline{G}$, we have $\alpha(G) = \omega(\overline{G})$.
A matrix $X \in \mathbb{R}^{n \times n}$ is circulant if $X_{i+1, j+1} = X_{ij}$ for all $i, j \in \mathbb{Z}_n = \{0, 1, \ldots, n - 1\}$.
A graph $G$ on~$n$ vertices is circulant if there exists a labeling of its vertices with $\mathbb{Z}_n$ such that its adjacency matrix is circulant. If $G$ is circulant, its complement $\overline{G}$ is also circulant.

Furthermore, a graph $G$ is vertex-transitive if its automorphism group acts transitively on $V$. Analogously, a graph $G$ is edge-transitive if its automorphism group acts transitively on $E$. A regular graph $G$ that is neither complete nor empty is said to be strongly regular with parameters $(n, r, a, c)$ if it is $r$-regular, if every pair of adjacent vertices has $a$ common neighbors, and if every pair of distinct non-adjacent vertices has $c$ common neighbors. See Godsil and Royle~\cite{Godsil} for more information on these definitions.

\section{Theoretical background}\label{Prerequisites}

In this section, we provide the theoretical background required for this paper. 

\subsection{Paley graphs}\label{Paley_graphs}

Paley graphs are named after the English mathematician Raymond Edward Alan Christopher Paley (1907-1933) and are closely related to the Paley construction for constructing Hadamard matrices, see~\cite{Paley}. Paley graphs were independently introduced by Sachs in~\cite{Sachs} and by Erd{\"o}s and R{\'e}nyi in~\cite{Erdos}. Some aspects of the history of the Paley graphs can be found in~\cite{Jones}.

Paley graphs are defined in the following way. Let $p$ be a prime, $s$ a positive integer, and let $q$ be a prime power, i.e.\ $q = p^s$, such that $q \equiv 1 \pmod{4}$. Then the Paley graph has as vertex set the elements of the finite field $\mathbb{F}_q = \{0, \ldots, q - 1\}$, with two vertices being adjacent if and only if their difference is a nonzero square in $\mathbb{F}_q$. The congruence condition on $q$ implies that $-1$ is a square in $\mathbb{F}_q$; hence, the graph is undirected. For given $q$, we denote such a Paley graph by $P_q$ and write $V_q = \{0, \ldots, q - 1\}$ and $E_q$ for the set of vertices and edges, respectively.

Paley graphs have been extensively studied due to their various properties; see, for instance,~\cite{Brouwer}. Among others, Paley graphs are quasi-random graphs, as shown in~\cite{Bollobas2001}, and their clique numbers play an important role in Ramsey theory; see, for instance,~\cite{Xu}. In the following text, we focus on the particular characteristics of Paley graphs necessary for determining and computing their stability numbers.

The first important property is that Paley graphs are self-complementary, as shown in~\cite{Bollobas2001}. This leads to the following observation.
\begin{obs}\label{alpha_equals_clique}
Let $P_q$ be a Paley graph. Then $\alpha(P_q) = \omega(P_q)$.
\end{obs}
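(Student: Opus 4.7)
The plan is to deduce the observation directly from two facts already available: (i) the general identity $\alpha(G)=\omega(\overline{G})$ for every graph $G$, recalled in the notation section, and (ii) the self-complementarity of Paley graphs, cited from~\cite{Bollobas2001} in the paragraph preceding the observation. Combining these two yields the claim in a single line, so the ``proof'' is really just chaining the two facts together.

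More concretely, I would first note that since $P_q$ is self-complementary, there exists a graph isomorphism $\varphi \colon P_q \to \overline{P_q}$. Graph isomorphisms preserve adjacency, hence in particular they preserve cliques, so $\omega(P_q)=\omega(\overline{P_q})$. Applying the general identity $\alpha(G)=\omega(\overline{G})$ with $G=P_q$ gives $\alpha(P_q)=\omega(\overline{P_q})$, and combining the two equalities produces $\alpha(P_q)=\omega(P_q)$, which is the claim.

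Since all ingredients are either elementary or cited, there is essentially no obstacle. The only step worth mentioning for completeness is a reminder that self-complementarity is a nontrivial combinatorial property whose proof (given for instance in~\cite{Bollobas2001}) relies on the multiplicative structure of $\mathbb{F}_q$: multiplication by a nonsquare induces a bijection on $V_q$ that sends edges of $P_q$ to non-edges, which is the isomorphism $\varphi$ above. For the purposes of the observation, however, it suffices to invoke self-complementarity as a black box, and the proof collapses to the two-line argument sketched above.
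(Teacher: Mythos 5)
Your proposal is correct and matches the paper's (implicit) argument exactly: the observation is stated as an immediate consequence of the self-complementarity of $P_q$ combined with the general identity $\alpha(G)=\omega(\overline{G})$ recalled in the notation section. Nothing further is needed.
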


Furthermore, Paley graphs are vertex and edge transitive, and they are strongly regular graphs with parameters $(q, \frac{q-1}{2}, \frac{q-5}{4}, \frac{q-1}{4})$, as also demonstrated in \cite{Bollobas2001}. A property of strongly regular graphs is that the adjacency matrix has only three different eigenvalues, as shown in~\cite{Godsil}. In particular, if~$G$ is a strongly regular graph with parameters $(n, r, a, c)$, then the eigenvalues of the matrix $A$ are $r$, as well as $\frac{1}{2}\Bigl((a-c) \pm \sqrt{(a-c)^2 + 4(r-c)}\Bigr)$. In the context of Paley graphs, this leads to the following observation.

\begin{obs}\label{eigenvalues_Paley}
The adjacency matrix $A$ of a Paley graph $P_q$ has eigenvalues $\frac{q-1}{2}$ and $\frac{-1 \pm \sqrt{q}}{2}$.
\end{obs}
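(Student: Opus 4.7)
The observation is an immediate plug-in into the general eigenvalue formula for strongly regular graphs that was recalled in the paragraph just above the statement. The plan is therefore to simply substitute the Paley parameters into that formula and simplify.

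First I would recall that by the preceding discussion, Paley graphs are strongly regular with parameters
\[
(n, r, a, c) = \Bigl(q,\ \tfrac{q-1}{2},\ \tfrac{q-5}{4},\ \tfrac{q-1}{4}\Bigr),
\]
and that for any strongly regular graph with parameters $(n,r,a,c)$ the adjacency matrix has eigenvalues $r$ and $\tfrac{1}{2}\bigl((a-c) \pm \sqrt{(a-c)^2 + 4(r-c)}\bigr)$. The first Paley eigenvalue is then read off directly as $r = \tfrac{q-1}{2}$.

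For the remaining two eigenvalues I would compute the two ingredients $a-c$ and $r-c$ separately. One gets
\[
a - c = \tfrac{q-5}{4} - \tfrac{q-1}{4} = -1, \qquad r - c = \tfrac{q-1}{2} - \tfrac{q-1}{4} = \tfrac{q-1}{4},
\]
so that the discriminant simplifies neatly as $(a-c)^2 + 4(r-c) = 1 + (q-1) = q$. Substituting back yields the other two eigenvalues $\tfrac{1}{2}(-1 \pm \sqrt{q}) = \tfrac{-1 \pm \sqrt{q}}{2}$, which is exactly what is claimed.

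Since the proof is a one-line substitution, there is no genuine obstacle; the only thing to watch is the sign bookkeeping in $a-c$ (it is negative, which is why the linear coefficient in the eigenvalue formula becomes $-1$ rather than $+1$), and the cancellation that turns the discriminant into exactly $q$ — this is what makes the final expression so clean.
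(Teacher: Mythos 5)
Your proposal is correct and follows exactly the route the paper intends: the observation is stated as an immediate consequence of the general eigenvalue formula for strongly regular graphs applied to the Paley parameters $(q, \tfrac{q-1}{2}, \tfrac{q-5}{4}, \tfrac{q-1}{4})$, and your arithmetic ($a-c=-1$, $r-c=\tfrac{q-1}{4}$, discriminant $=q$) checks out. Nothing is missing.
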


Additionally, since Paley graphs are self-complementary, we note the following.

\begin{obs}\label{eigenvalue_Paley_complement}
The adjacency matrix $\overline{A}$ of $\overline{P}_q$ has the same eigenvalues as the adjacency matrix $A$ of $P_q$.
\end{obs}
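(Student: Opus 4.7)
The plan is to exploit the self-complementarity of Paley graphs, which has already been invoked in the text (citing \cite{Bollobas2001}) and was used in the proof of Observation~\ref{alpha_equals_clique}. Self-complementarity means that $\overline{P}_q \cong P_q$ as graphs, so there exists a bijection $\varphi : V_q \to V_q$ with $\{i,j\} \in E_q$ if and only if $\{\varphi(i),\varphi(j)\} \in \overline{E}_q$.

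First I would translate this graph isomorphism into a statement about adjacency matrices: any such bijection is given by a permutation $\sigma$ on $V_q$, and the associated permutation matrix $P_\sigma$ satisfies $\overline{A} = P_\sigma^{T} A P_\sigma$. This is the standard correspondence between graph isomorphisms and simultaneous row/column permutations of adjacency matrices. Then I would conclude by noting that conjugation by a permutation matrix is a similarity transformation, and similar matrices share the same spectrum. Combining this with the explicit eigenvalues listed in Observation~\ref{eigenvalues_Paley} yields the claim. There is no real obstacle here; the only point worth stating carefully is that self-complementarity supplies an honest graph isomorphism (not merely matching parameters), so that the adjacency matrices are genuinely similar and not just cospectral by accident.
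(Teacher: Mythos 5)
Your argument is correct and matches the paper's reasoning exactly: the paper states this observation without a written proof, deriving it directly from the self-complementarity of $P_q$, which is precisely the permutation-similarity argument you spell out. Nothing further is needed.
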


Finally, we note that if $q$ is a prime, then $P_q$ is circulant and the subgraph of $P_q$ induced by the set of neighbors of the vertex $0$ is also circulant. For further details, see~\cite {Magsino}. 

Next, we investigate bounds on the stability numbers of Paley graphs from the literature. In~\cite{Cohen}, Cohen investigated lower bounds on the stability numbers of Paley graphs and showed that for any $P_q$ with $q = p^s$ for some prime $p$ and some positive integer $s$,
\begin{align*}
\alpha(P_q) \geq \frac{p}{p-1} \left( \frac{\frac{1}{2} \log q - 2 \log \log q}{\log 2} + 1 \right)
\end{align*}
holds. 

Regarding upper bounds on the stability numbers of Paley graphs, 
due to their regularity, one can apply Hoffman's bound~\cite{Hoffman} to derive a closed-form upper bound. In general, let $G$ be an
$r$-regular graph on $n$ vertices, and let $\tau$ denote the smallest eigenvalue of the adjacency matrix of~$G$. Then, according to Hoffman's bound,
\begin{align*}
\alpha(G) \leq \frac{n}{1 - \frac{r}{\tau}}. 
\end{align*}

In case of a Paley graph $P_q$, we have that $r = \frac{q-1}{2}$, and $\tau = \frac{-1 - \sqrt{q}}{2}$, according to Observation~\ref{eigenvalues_Paley}. Hence,
\begin{align}\label{Hoffman}
	\alpha(P_q) \leq \sqrt{q}
\end{align}
for every $P_q$. If $q$ is a square, then the upper bound~\eqref{Hoffman} is tight, as shown in~\cite{Broere}.

The upper bound for cases when $q$ is not a square and $q \neq 5$ was improved by Maistrelli and Penman~\cite{Maistrelli} to $\alpha(G_q) \leq \sqrt{q - 4}$. Recently, Hanson and Petridis~\cite{Hanson} considered Paley graphs $P_q$ where $q$ is a prime and gave the even better closed-form bound
\begin{align*}
	b_{H}(P_q) = \frac{\sqrt{2q-1}+1}{2},
\end{align*}
where equality holds for $q \in \{5, 13, 41\}$.

\subsection{Lovász theta function}\label{semidefinite_relaxation}

In 1979, Lovász~\cite{Lov:79} introduced the graph parameter Lovász theta function $\vartheta(G)$, which yields an upper bound on the stability number of $G$ and which can be computed as the optimal objective function value of an SDP in polynomial time to arbitrary fixed precision, as shown in~\cite{VanBoyd}. There are several different formulations for the Lovász theta function. A study of different variants of the Lovász theta function can be found in \cite{GALLI2017159}. In~\cite{LovSch}, Lovász and Schrijver showed that for a graph~$G$ on $n$ vertices the formulations
\begin{align}
	\vartheta(G) &= \max \Biggl\{\mathds{1}^T_n x \colon \begin{pmatrix} X & x \\ x^T & 1
	\end{pmatrix} \succeq 0, \diag(X) = x, X_{ij} = 0 ~~\forall \{i,j\} \in E\Biggr\} \label{theta_1} \\
	&= \max \{\langle \mathds{1}_{n \times n}, X \rangle \colon X \succeq 0, ~ \tr(X) = 1, ~ X_{ij} = 0 ~~\forall \{i,j\} \in E\} \label{theta_2}
\end{align}
 are equivalent. 
 
 The relationship between the formulations~\eqref{theta_1} and~\eqref{theta_2} has been studied by several authors. In~\cite[Lemma~2.8 and Theorem~2.9]{Gru:03}, Gruber and Rendl showed the following statements.

\begin{lem}~\label{opt_1}
	Let $(x^*, X^*)$ be a feasible solution of~\eqref{theta_1} with $\tr(X^*) > 0$. Then $X^\prime = \frac{1}{\tr(X^*)}{X^*}$ is a feasible solution of~\eqref{theta_2}.
\end{lem}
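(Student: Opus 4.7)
The plan is to verify the three defining constraints of formulation~\eqref{theta_2} one by one for $X' = \frac{1}{\tr(X^*)} X^*$, drawing each from the corresponding constraint of~\eqref{theta_1}. The only slightly non-obvious ingredient is that positive semidefiniteness of the big block matrix implies positive semidefiniteness of the leading principal block $X^*$; everything else is a one-line consequence of positively scaling.

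First I would observe that since $\begin{pmatrix} X^* & x^* \\ (x^*)^T & 1 \end{pmatrix} \succeq 0$, every principal submatrix of this block matrix is positive semidefinite. In particular, the leading $n \times n$ block gives $X^* \succeq 0$. Because $\tr(X^*) > 0$ by assumption, the scalar $\frac{1}{\tr(X^*)}$ is strictly positive, and hence $X' = \frac{1}{\tr(X^*)} X^* \succeq 0$, establishing the first constraint of~\eqref{theta_2}.

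Next I would compute the trace: by linearity, $\tr(X') = \frac{1}{\tr(X^*)} \tr(X^*) = 1$, giving the second constraint. For the third constraint, feasibility of $(x^*, X^*)$ in~\eqref{theta_1} gives $X^*_{ij} = 0$ for every $\{i,j\} \in E$, so the corresponding entry $X'_{ij} = \frac{1}{\tr(X^*)} X^*_{ij} = 0$ vanishes as well. All three conditions of~\eqref{theta_2} are thus satisfied, and $X'$ is feasible.

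There is no real obstacle here; the statement is essentially a scaling and projection argument. The only subtlety worth flagging explicitly in the write-up is the use of the principal submatrix property of positive semidefinite matrices to extract $X^* \succeq 0$ from the $(n+1) \times (n+1)$ block constraint of~\eqref{theta_1}, since the constraint $X \succeq 0$ is stated on the smaller matrix in~\eqref{theta_2}.
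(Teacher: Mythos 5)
Your proof is correct and complete: extracting $X^* \succeq 0$ as a principal submatrix of the positive semidefinite block matrix, normalizing the trace, and noting that the zero pattern on edges is preserved under positive scaling is exactly what is needed, and these are precisely the three constraints of~\eqref{theta_2}. The paper itself does not prove this lemma but cites it from Gruber and Rendl, so there is no in-paper argument to compare against; your write-up is the standard proof one would give.
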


\begin{lem}\label{opt_2}
	Let $X^\prime$ be an optimal solution of~\eqref{theta_2}, and let $Y = \langle \mathds{1}_{n \times n}, X^\prime \rangle X^\prime$. Then $(\diag(Y), Y)$ is an optimal solution of~\eqref{theta_1}.
\end{lem}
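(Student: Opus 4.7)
The plan is to verify that the pair $(y, Y) := (\diag(Y), Y)$ with $Y = tX'$ and $t := \langle \mathds{1}_{n \times n}, X'\rangle$ satisfies every constraint in~\eqref{theta_1} and attains the optimal value $\vartheta(G)$. The first three checks are quick: the edge constraints transfer directly as $Y_{ij} = tX'_{ij} = 0$ for every $\{i,j\} \in E$, the diagonal condition $\diag(Y) = y$ holds by definition, and the objective equals $\mathds{1}_n^T y = \tr(Y) = t\,\tr(X') = t$, which matches $\vartheta(G)$ because~\eqref{theta_2} computes $\vartheta(G)$ and $X'$ is optimal there.

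What remains, and where I expect the real work to lie, is the block positive semidefiniteness of the $(n+1)\times(n+1)$ matrix $\begin{pmatrix} Y & y \\ y^T & 1\end{pmatrix}$. Applying the Schur complement with respect to the scalar $1$ in the bottom-right corner reduces this to $Y \succeq yy^T$, equivalently $X' \succeq t\,dd^T$ with $d := \diag(X')$. Mere feasibility of $X'$ in~\eqref{theta_2} is not enough for this matrix inequality, so optimality of $X'$ must enter the argument.

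To bring optimality in, I would invoke SDP duality for~\eqref{theta_2}. Its Lagrangian dual is to minimize $\mu$ subject to $\mu I_n + Z - \mathds{1}_{n \times n} \succeq 0$, where $Z$ is symmetric, zero on the diagonal, and supported on the edges of $G$. Slater's condition is easily verified for~\eqref{theta_2}, so strong duality applies, and I can fix a dual optimum with $\mu = t$. Complementary slackness then yields $(t I_n + Z - \mathds{1}_{n \times n})\, X' = 0$. Reading the diagonal of this identity and observing that the $ZX'$ contribution vanishes on the diagonal (since $Z_{ik} \neq 0$ requires $\{i,k\} \in E$, for which $X'_{ki} = 0$), I obtain the key structural identity $X' \mathds{1}_n = t\, d$.

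Finally, I would substitute $t\, d = X'\mathds{1}_n$ into the target inequality and recognize the result as Cauchy--Schwarz for the positive semidefinite bilinear form $\langle a,b\rangle_{X'} := a^T X' b$. Concretely, for every $v \in \mathbb{R}^n$, $t(v^T d)^2 = \tfrac{1}{t}(v^T X' \mathds{1}_n)^2 \le \tfrac{1}{t}(\mathds{1}_n^T X' \mathds{1}_n)(v^T X' v) = v^T X' v$, where the last step uses $\mathds{1}_n^T X'\mathds{1}_n = t$. This gives $X' \succeq t\,dd^T$, hence the required block positive semidefiniteness, completing the feasibility check and showing that $(y, Y)$ attains the value $\vartheta(G)$ in~\eqref{theta_1}, so it is optimal there.
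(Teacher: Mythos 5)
Your proof is correct. Note that the paper does not actually prove this lemma: it is quoted from Gruber and Rendl~\cite[Lemma~2.8 and Theorem~2.9]{Gru:03}, so there is no in-paper argument to compare against line by line. Your derivation is a valid self-contained one. The easy feasibility checks (edge constraints, $\diag(Y)=y$, objective value $t=\vartheta(G)$) are handled correctly, and you rightly identify that the only substantive point is $Y \succeq yy^T$, equivalently $X' \succeq t\,dd^T$ with $d=\diag(X')$, which genuinely requires optimality of $X'$ and not just feasibility. The crux is the row-sum identity $X'\mathds{1}_n = t\,d$, which you obtain from complementary slackness for the dual $\min\{\mu : \mu I_n + Z - \mathds{1}_{n\times n} \succeq 0\}$; Slater's condition holds for~\eqref{theta_2} via $\tfrac{1}{n}I_n$, the primal feasible set is compact, so both optima are attained with zero gap, and $\langle S, X'\rangle = 0$ with $S, X' \succeq 0$ indeed forces $SX'=0$. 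The observation that $(ZX')_{ii}=0$ because $Z$ is supported on edges where $X'$ vanishes is exactly what makes the diagonal of the slackness identity usable. The final Cauchy--Schwarz step for the form $a^T X' b$, together with $\mathds{1}_n^T X' \mathds{1}_n = t \geq 1 > 0$, cleanly closes the argument. This duality-plus-Cauchy--Schwarz route is a perfectly good (and arguably the most transparent) way to establish the row-sum identity at optimality, which is also the structural heart of the classical treatments of the equivalence between the two theta formulations.
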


Another property of the formulation~\eqref{theta_2} studied in the original work by Lovász~\cite[Corollary~2]{Lov:79} is the relationship between the Lovász theta function of a graph $G$ and its complement $\overline{G}$. In this context, Lovász proved the following.

\begin{lem}\label{theta_complement}
For any graph $G$ on $n$ vertices, 
$\vartheta(G) \vartheta(\overline{G}) \geq n$ holds.
\end{lem}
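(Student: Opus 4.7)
The plan is to follow Lov\'asz's original argument from~\cite[Corollary~2]{Lov:79}, which rests on the orthonormal-representation characterization of $\vartheta$ equivalent to the SDP formulation~\eqref{theta_2} through Gram factorization. Recall that for any graph $H$ on $n$ vertices,
\begin{equation*}
\vartheta(H) = \min\Bigl\{\max_{i=1,\dots,n}\frac{1}{(c^T u_i)^2}\Bigr\},
\end{equation*}
where the minimum runs over all orthonormal representations $(u_1,\dots,u_n)$ of $H$ (unit vectors with $u_i^T u_j = 0$ whenever $\{i,j\}\notin E(H)$) and all unit ``handle'' vectors $c$. The observation enabling the bound is that for any two distinct indices $i\neq j$, at least one of $\{i,j\}\notin E(G)$ or $\{i,j\}\notin E(\overline G)$ must hold, so optimal representations of $G$ and $\overline G$ can be combined into a single orthonormal system in a tensor-product space.

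Concretely, I fix optimal $(u_i),c$ for $G$ and $(w_i),c'$ for $\overline G$ and set $v_i = u_i\otimes w_i$ together with $d = c\otimes c'$. Then $v_i^T v_j = (u_i^T u_j)(w_i^T w_j) = 0$ for $i\neq j$ since at least one of the two factors vanishes by the observation above, and $\|v_i\| = \|d\| = 1$; hence $(v_i)_{i=1}^n$ is orthonormal. After flipping signs of individual $u_i$ or $w_i$, which preserves all constraints, I may further assume $c^T u_i > 0$ and $c'^T w_i > 0$ for every $i$.

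The core inequality combines Bessel with Cauchy--Schwarz. Bessel's inequality applied to $d$ and the orthonormal system $(v_i)$ gives $\sum_{i=1}^n (c^T u_i)^2(c'^T w_i)^2 = \sum_{i=1}^n (d^T v_i)^2 \leq \|d\|^2 = 1$, and Cauchy--Schwarz applied to the identity $n = \sum_{i=1}^n (c^T u_i)(c'^T w_i)\cdot [(c^T u_i)(c'^T w_i)]^{-1}$ then yields
\begin{equation*}
n^2 \leq \Bigl(\sum_{i=1}^n (c^T u_i)^2(c'^T w_i)^2\Bigr)\Bigl(\sum_{i=1}^n \frac{1}{(c^T u_i)^2(c'^T w_i)^2}\Bigr) \leq \sum_{i=1}^n \frac{1}{(c^T u_i)^2(c'^T w_i)^2}.
\end{equation*}
Finally, the right-hand side is bounded by $\bigl(\max_i 1/(c^T u_i)^2\bigr)\sum_{i=1}^n 1/(c'^T w_i)^2 \leq \vartheta(G)\cdot n\vartheta(\overline G)$, using the optimality equalities $\max_i 1/(c^T u_i)^2 = \vartheta(G)$ and $\max_i 1/(c'^T w_i)^2 = \vartheta(\overline G)$. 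Therefore $n^2 \leq n\vartheta(G)\vartheta(\overline G)$, i.e., $\vartheta(G)\vartheta(\overline G)\geq n$.

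The main obstacle is not technical but rather one of formulation: the paper introduces $\vartheta$ only through~\eqref{theta_1} and~\eqref{theta_2}, so the very first step requires recalling the orthonormal-representation characterization used above. Equivalently, one can translate directly into the SDP language by writing an optimal $X$ of~\eqref{theta_2} as $X = BB^T$ and normalizing the rows of $B$, which produces an orthonormal representation of $\overline G$ together with a canonical handle; the remainder of the argument then carries over essentially verbatim.
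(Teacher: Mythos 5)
Your proof is correct and is essentially the argument of Lovász's Corollary~2, which is exactly what the paper relies on for this lemma (it states the result with a citation and gives no proof of its own). The only cosmetic difference is your detour through Cauchy--Schwarz: once Bessel gives $\sum_{i}(c^Tu_i)^2(c'^Tw_i)^2\le 1$, you can conclude directly, since optimality of the two representations makes each summand at least $1/\bigl(\vartheta(G)\vartheta(\overline{G})\bigr)$, whence $n/\bigl(\vartheta(G)\vartheta(\overline{G})\bigr)\le 1$.
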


Additionally, if $G$ is vertex-transitive, the following theorem, also proven in~\cite[Theorem~8]{Lov:79}, holds.

\begin{thm}\label{theta_vertex_transitive}
If $G$ is a vertex-transitive graph on $n$ vertices, then
$\vartheta(G) \vartheta(\overline{G}) = n$ holds.
\end{thm}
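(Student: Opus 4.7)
The starting observation is that Lemma~\ref{theta_complement} already gives $\vartheta(G)\vartheta(\overline{G}) \geq n$, so the real work is to establish the matching upper bound $\vartheta(G)\vartheta(\overline{G}) \leq n$ for vertex-transitive $G$. The plan is to symmetrize optimal solutions of formulation~\eqref{theta_2} for $G$ and for $\overline{G}$ over the common automorphism group $\Gamma = \mathrm{Aut}(G) = \mathrm{Aut}(\overline{G})$, and then combine the resulting structure with a Cauchy--Schwarz-type bound for PSD matrices.

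Concretely, I would fix optimal $X^*$ for $\vartheta(G)$ and $Y^*$ for $\vartheta(\overline{G})$ in formulation~\eqref{theta_2}, and, letting $P_\sigma$ denote the permutation matrix of $\sigma \in \Gamma$, set $\tilde{X} = \frac{1}{|\Gamma|}\sum_{\sigma \in \Gamma} P_\sigma X^* P_\sigma^T$ and $\tilde{Y}$ analogously. Each summand is PSD, has trace $1$, still vanishes on the respective edge sets (which $\Gamma$ preserves), and has unchanged objective value since $P_\sigma \mathds{1}_n = \mathds{1}_n$; hence $\tilde{X}$ and $\tilde{Y}$ remain optimal and, by construction, satisfy $P_\sigma \tilde{X} P_\sigma^T = \tilde{X}$ for all $\sigma \in \Gamma$. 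Vertex-transitivity of $\Gamma$ on $V$ now forces all diagonal entries of $\tilde{X}$ to coincide, so $\tilde{X}_{ii} = \frac{1}{n}$ and likewise $\tilde{Y}_{ii} = \frac{1}{n}$. Moreover $\tilde{X}\mathds{1}_n$ is itself $\Gamma$-invariant, hence a scalar multiple of $\mathds{1}_n$; comparing with $\mathds{1}_n^T \tilde{X} \mathds{1}_n = \vartheta(G)$ gives $\tilde{X}\mathds{1}_n = \frac{\vartheta(G)}{n}\mathds{1}_n$, and analogously $\tilde{Y}\mathds{1}_n = \frac{\vartheta(\overline{G})}{n}\mathds{1}_n$.

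With this structure the conclusion should follow quickly. For every off-diagonal pair $\{i,j\}$, either $\{i,j\} \in E$ (forcing $\tilde{X}_{ij} = 0$) or $\{i,j\} \in E(\overline{G})$ (forcing $\tilde{Y}_{ij} = 0$), so together with the known diagonals I get $\tr(\tilde{X}\tilde{Y}) = \sum_{i,j}\tilde{X}_{ij}\tilde{Y}_{ij} = \frac{1}{n}$. The matrix $M := \tilde{X}^{1/2}\tilde{Y}\tilde{X}^{1/2}$ is PSD with $\tr(M) = \tr(\tilde{X}\tilde{Y}) = \frac{1}{n}$, while $\tilde{X}^{1/2}\mathds{1}_n = \sqrt{\vartheta(G)/n}\,\mathds{1}_n$ yields $\mathds{1}_n^T M \mathds{1}_n = \frac{\vartheta(G)\vartheta(\overline{G})}{n}$. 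Applying the elementary inequality $\mathds{1}_n^T M \mathds{1}_n \leq n\,\tr(M)$, valid for any $n \times n$ PSD matrix $M$ (a direct consequence of Cauchy--Schwarz applied to its spectral decomposition), then gives $\vartheta(G)\vartheta(\overline{G}) \leq n$, which combined with Lemma~\ref{theta_complement} finishes the proof. The one delicate point is verifying that the group average preserves feasibility for both \eqref{theta_2}-SDPs simultaneously; once $\mathrm{Aut}(G) = \mathrm{Aut}(\overline{G})$ is invoked this is immediate, after which the Hadamard/trace identity and the PSD inequality make the bound fall out with essentially no further computation.
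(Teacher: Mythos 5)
Your argument is correct. Note that the paper itself gives no proof of this statement: it is quoted directly from Lov\'asz \cite{Lov:79} as Theorem~8 there, so there is no in-paper proof to compare against. Your derivation is a complete, self-contained version of the standard symmetrization argument, and every step checks out: averaging over $\Gamma=\mathrm{Aut}(G)=\mathrm{Aut}(\overline{G})$ preserves feasibility and optimality of \eqref{theta_2} for both $G$ and $\overline{G}$ (the feasible set is convex and invariant under conjugation by the permutation matrices $P_\sigma$, and $P_\sigma\mathds{1}_n=\mathds{1}_n$ fixes the objective); vertex-transitivity forces constant diagonals $\tilde{X}_{ii}=\tilde{Y}_{ii}=\tfrac1n$ and makes $\mathds{1}_n$ an eigenvector of $\tilde{X}$ and $\tilde{Y}$ with eigenvalues $\vartheta(G)/n$ and $\vartheta(\overline{G})/n$; the complementary zero patterns give $\tr(\tilde{X}\tilde{Y})=\tfrac1n$; and the inequality $\mathds{1}_n^TM\mathds{1}_n\le n\tr(M)$ for the PSD matrix $M=\tilde{X}^{1/2}\tilde{Y}\tilde{X}^{1/2}$ delivers $\vartheta(G)\vartheta(\overline{G})\le n$, which together with Lemma~\ref{theta_complement} yields equality. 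One cosmetic remark: the congruence trick with $\tilde{X}^{1/2}$ can be bypassed by splitting $\tilde{Y}=\tfrac{\vartheta(\overline{G})}{n^2}\mathds{1}_n\mathds{1}_n^T+\tilde{Y}'$ with $\tilde{Y}'\succeq 0$ supported on $\mathds{1}_n^{\perp}$ and using $\tr(\tilde{X}\tilde{Y}')\ge 0$ to get $\tfrac1n=\tr(\tilde{X}\tilde{Y})\ge\tfrac{\vartheta(G)\vartheta(\overline{G})}{n^2}$ directly, but this is a matter of taste rather than a correction.
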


\subsection{Exact subgraph hierarchy}\label{method_ESH}

We now recall the definitions of exact subgraph constraints 
and 
the exact subgraph hierarchy, whose aim it is to strengthen the Lovász theta 
function~$\vartheta(G)$ into the direction of $\alpha(G)$. We follow the 
presentation of 
Gaar~\cite{GaarVersionsESH}. 
For a graph $G = (V,E)$ with  $|V|=n$ and $V = \{0, \dots, n-1\}$
the squared stable set polytope 
$\STAB^{2}(G)$ is 
defined as 
\begin{align*}
\STAB^{2}(G) &= \conv\left\{ss^{T}: s \in 
\{0,1\}^{n}, \, s_{i}s_{j} = 0 ~ \forall 
\{i,j\} \in E \right\}, 
\end{align*} 
i.e., $\STAB^{2}(G)$ is the convex hull of the outer product of all 
incidence vectors of stable sets in $G$. It turns out that when the constraint 
$X \in \STAB^{2}(G)$ is added to the SDP $\eqref{theta_1}$ to compute the 
Lovász theta 
function $\vartheta(G)$, 
the optimal objective function 
is $\alpha(G)$, see 
Gaar~\cite{GaarVersionsESH}. 
Unfortunately, the resulting SDP is in general too large for 
computations, so the idea is to include the constraint $X \in \STAB^{2}(G)$ 
only partially for smaller subgraphs.  

In particular, for a subset of the vertices $I \subseteq V$, the subgraph of 
$G$ that is 
induced by~$I$ is denoted by $G_I$. 
Moreover, the submatrix of 
$X\in\R^{n\times n}$ which is indexed by~$I$ is denoted by 
$X_I=(X_{i,j})_{i,j\in I}$.   
Then the exact subgraph constraint (ESC) for $G_I$ is defined as
$
X_I \in \STAB^{2}(G_I).
$
Furthermore, 
let $J$ be a set of subsets of $V$. Then $z_{J}(G)$ is the optimal 
objective 
function value
of~\eqref{theta_1} with the ESC for 
every 
subgraph induced by a set in $J$, so
\begin{align*}
z_{J}(G) = & 
\max 
\Biggl\{\mathds{1}_n^T x : \begin{pmatrix} X & x \\ x^T & 1
\end{pmatrix} \succeq 0, \, \diag(X) = x, \, X_{ij} = 0 ~\forall \{i,j\} \in 
E, \\
&\qquad \qquad X_I \in \STAB^{2}(G_I) ~ \forall I \in J\Biggr\}.
\end{align*}
Finally, the $k$-th 
level of the exact subgraph hierarchy (ESH) is defined as 
$z_{k}(G) = z_{J_k}(G)$ for $J_k = \{I \subseteq V: \vert I \vert = k\}$, i.e., it 
contains the ESC for every subgraph of $G$ of order~$k$. For $k > n$, we define $z_k(G) = z_n(G)$ as a logical continuation of the ESH.

The ESH goes back to Adams, 
Anjos, 
Rendl and Wiegele~\cite{AARW:15}, who called the ESCs for subgraphs of order~$k$ the $k$-projection constraints, and who considered the hierarchy not only 
for the stable set problem, but also for the Max-Cut problem.
Furthermore,
\begin{align}
\label{ineq:relationship_z}
\alpha(G) = z_n(G) \leq \cdots \leq z_{k+1}(G) \leq z_k(G) \leq \cdots \leq z_1(G) = z_0(G) = \vartheta(G)
\end{align}
holds, so the higher the considered level~$k$ of the ESH is, 
the better the bounds on the stability number are.
Later on, Gaar~\cite{Gaa:18} and Gaar and Rendl~\cite{Gaa:20} 
considered the option of not including all ESCs of subgraphs of a certain 
order into \eqref{theta_1}, but to 
include the ESC only for some cleverly chosen subgraphs. 
They provided a framework to obtain strengthened upper bounds on the 
stability number by iteratively computing $z_{J}(G)$ by solving the corresponding SDP, searching for violated 
ESCs and updating $J$.
The huge advantage of the ESH over other SDP-based hierarchies that start from 
the Lovász theta function $\vartheta(G)$ is that the size of the square matrix that needs to be 
positive semidefinite remains $n+1$ on each level of the ESH, see 
Gaar~\cite{GaarVersionsESH} for more details.

Furthermore, the bounds obtained with the help of ESCs were utilized within a 
branch-and-bound algorithm to solve the stable set problem to optimality by 
Gaar, Siebenhofer and Wiegele~\cite{GaarSiebenhoferWiegeleStabBaB}. 
Moreover, Gaar~\cite{GaarVersionsESH} investigated including the ESCs not 
starting from the SDP $\eqref{theta_1}$ to compute the 
Lovász theta 
function $\vartheta(G)$, but beginning from the alternative SDP 
$\eqref{theta_2}$.

\section{The exact subgraph hierarchy for Paley graphs}\label{section_ESH_Paley}

In this section, we explore the bounds on the stability numbers of Paley graphs obtained using the ESH.

\subsection{Optimal solution for SDP to compute the Lovász theta function}\label{optimal_solution_for_Paley}

We first demonstrate that for Paley graphs, an optimal solution for the SDP to compute the Lovász theta function can be explicitly constructed. 
We start by recalling that Paley graphs are vertex-transitive. Thus, from Theorem~\ref{theta_vertex_transitive} we have that $\vartheta(P_q) \vartheta(\overline{P_q}) = q$. Additionally, Paley graphs are self-complementary. This leads to the following important observation about the Lovász theta function for Paley graphs.

\begin{obs}\label{theta_paley_graphs}
	Let $P_q$ be a Paley graph. Then $\vartheta(P_q) = \sqrt{q}$.
\end{obs}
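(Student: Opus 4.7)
The plan is to combine two properties of Paley graphs that have already been recorded in the excerpt: vertex-transitivity and self-complementarity. Once both are in hand, the observation reduces to a one-line arithmetic consequence of Theorem~\ref{theta_vertex_transitive}.

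First, I would invoke Theorem~\ref{theta_vertex_transitive}, which applies because $P_q$ is vertex-transitive (noted in Section~\ref{Paley_graphs}). This yields the identity $\vartheta(P_q)\vartheta(\overline{P_q}) = q$. Next, I would use the fact that $P_q$ is self-complementary, i.e., there is a graph isomorphism $\overline{P_q} \cong P_q$. Since the Lovász theta function is invariant under graph isomorphism (it depends only on the isomorphism class of the graph, as is immediate from the formulation in~\eqref{theta_2}), this gives $\vartheta(\overline{P_q}) = \vartheta(P_q)$.

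Substituting this equality into the product identity produces $\vartheta(P_q)^2 = q$, and since $\vartheta(P_q) \geq 0$ by definition, taking the nonnegative square root yields $\vartheta(P_q) = \sqrt{q}$, as claimed.

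There is essentially no obstacle here: both ingredients have been stated earlier in the excerpt, and the isomorphism invariance of $\vartheta$ is a standard fact. If one wished to be completely self-contained, one could alternatively observe that the upper bound $\vartheta(P_q) \leq \sqrt{q}$ already follows from Hoffman's bound~\eqref{Hoffman} (since $\vartheta(G) \leq$ Hoffman's bound for regular graphs), and the matching lower bound $\vartheta(P_q) \geq \sqrt{q}$ follows from Lemma~\ref{theta_complement} together with self-complementarity, namely $\vartheta(P_q)^2 = \vartheta(P_q)\vartheta(\overline{P_q}) \geq q$. Either route is immediate, so I would favor the cleaner argument using Theorem~\ref{theta_vertex_transitive} directly.
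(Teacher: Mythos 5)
Your proof is correct and takes essentially the same route as the paper: the authors likewise combine Theorem~\ref{theta_vertex_transitive} (giving $\vartheta(P_q)\vartheta(\overline{P_q}) = q$ from vertex-transitivity) with self-complementarity to conclude $\vartheta(P_q)^2 = q$. Nothing further is needed.
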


Hence, for every $q$, we know the value of $\vartheta(P_q)$. 
As previously mentioned, in~\cite{Broere} it was proved that, if $q$ is a square, then $\alpha(P_q) = \sqrt{q}$. Thus, in this case $\vartheta(P_q)$ and~$\alpha(P_q)$ coincide. As a result, the Lovász theta function does not need to be improved as bound on $\alpha(P_q)$ because the following holds.
\begin{obs}\label{theta_exact_q_square}
    Let $P_q$ be a Paley graph such that $q$ is a square. Then
    \begin{align*}
    \alpha(P_q) = \sqrt{q} = \vartheta(P_q) = z_{k}(P_q)
    \end{align*}
    holds for all $k \leq n$.
\end{obs}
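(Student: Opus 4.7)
The plan is to combine three already-stated ingredients: the Broere--Döman--Ridley equality $\alpha(P_q) = \sqrt{q}$ for $q$ a square, the evaluation $\vartheta(P_q) = \sqrt{q}$ from Observation~\ref{theta_paley_graphs}, and the sandwiching chain~\eqref{ineq:relationship_z} relating $\alpha(G)$, $z_k(G)$ and $\vartheta(G)$ for every level of the ESH.

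First I would invoke Observation~\ref{theta_paley_graphs} (which itself relies on vertex-transitivity together with self-complementarity through Theorem~\ref{theta_vertex_transitive}) to fix the right-hand value $\vartheta(P_q) = \sqrt{q}$. Next I would cite the result from Broere, Döman and Ridley~\cite{Broere}, already mentioned in Section~\ref{Paley_graphs} in the discussion of Hoffman's bound, to obtain $\alpha(P_q) = \sqrt{q}$ whenever $q$ is a square.

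Then I would apply the hierarchy inequality~\eqref{ineq:relationship_z}, which reads
\begin{align*}
\alpha(P_q) = z_n(P_q) \leq \cdots \leq z_{k+1}(P_q) \leq z_k(P_q) \leq \cdots \leq z_1(P_q) = \vartheta(P_q),
\end{align*}
and observe that the two ends of the chain coincide and equal $\sqrt{q}$. Hence every intermediate quantity $z_k(P_q)$ is squeezed to $\sqrt{q}$ as well, for every $k \leq n$, which is the desired conclusion.

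There is no real obstacle here: the statement is a direct consequence of combining earlier results. The only item worth stating carefully is that the definition $z_k(G) = z_n(G)$ for $k > n$ is not needed since the claim is restricted to $k \leq n$, and that all inequalities in the chain collapse because $\alpha(P_q)$ already matches the Lovász theta upper bound in the square case. \epr
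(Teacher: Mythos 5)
Your argument is correct and is exactly the reasoning the paper intends: the text preceding the observation combines $\alpha(P_q)=\sqrt{q}$ from~\cite{Broere} with $\vartheta(P_q)=\sqrt{q}$ from Observation~\ref{theta_paley_graphs} and lets the chain~\eqref{ineq:relationship_z} collapse. Nothing is missing.
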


For all values of $q$, 
by combining Observation~\ref{theta_paley_graphs} with Lemmas~\ref{opt_1} and \ref{opt_2}, we are able to construct an optimal solution for~\eqref{theta_1} as follows. 

\begin{lem}\label{optimal_solution}
	Let $P_q$ be a Paley graph. Let $x^* \in \mathbb{R}^q$ and $X^* \in \mathbb{R}^{q \times q}$ with
	\begin{subequations}\label{solution}
		\begin{alignat}{3}
			x^*_i &= \frac{1}{\sqrt{q}} &&\quad \forall i \in V_q \label{sol_1}\\
			X^*_{ii} & = x^*_{i} &&\quad \forall i \in V_q \label{sol_2}\\
			X^*_{ij} &= 0 &&\quad \forall \{i,j\} \in E_q \label{sol_3} \\
			X^*_{ij} &= \frac{2}{q + \sqrt{q}} &&\quad \forall \{i,j\} \notin E_q \label{sol_4}.
		\end{alignat}
	\end{subequations}
	Then $(x^*, X^*)$ is an optimal solution of~\eqref{theta_1}.
\end{lem}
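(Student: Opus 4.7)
The plan is to show that $(x^*, X^*)$ is feasible for~\eqref{theta_1} with objective value $\sqrt{q}$, and then to conclude optimality from Observation~\ref{theta_paley_graphs}, which gives $\vartheta(P_q) = \sqrt{q}$. The conditions $\diag(X^*) = x^*$ and $X^*_{ij} = 0$ for every $\{i,j\} \in E_q$ hold by construction, i.e., by~\eqref{sol_2} and~\eqref{sol_3}, and the objective evaluates to $\mathds{1}_q^T x^* = q \cdot \frac{1}{\sqrt{q}} = \sqrt{q}$. The only nontrivial step is therefore to verify that the block matrix
\begin{equation*}
M = \begin{pmatrix} X^* & x^* \\ (x^*)^T & 1 \end{pmatrix}
\end{equation*}
is positive semidefinite.

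I would handle this by applying the Schur complement with respect to the scalar lower-right block, which reduces the problem to showing $X^* - x^*(x^*)^T \succeq 0$, i.e., $X^* - \frac{1}{q}\, \mathds{1}_{q \times q} \succeq 0$. Writing $A$ for the adjacency matrix of $P_q$, conditions~\eqref{sol_2}--\eqref{sol_4} allow $X^*$ to be written as a real linear combination of $I_q$, $\mathds{1}_{q \times q}$, and $A$, namely
\begin{equation*}
X^* = \frac{1}{\sqrt{q}}\, I_q + \frac{2}{q + \sqrt{q}}\,\bigl(\mathds{1}_{q \times q} - I_q - A\bigr),
\end{equation*}
so $X^* - \frac{1}{q}\mathds{1}_{q \times q}$ is likewise such a combination.

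Since $P_q$ is regular, the matrices $A$, $I_q$, and $\mathds{1}_{q \times q}$ commute and are simultaneously diagonalizable. By Observation~\ref{eigenvalues_Paley}, the eigenvalues of $A$ are $\frac{q-1}{2}$ on the span of $\mathds{1}_q$ and $\frac{-1 \pm \sqrt{q}}{2}$ on its orthogonal complement, while $\mathds{1}_{q \times q}$ has eigenvalue $q$ on $\mathds{1}_q$ and $0$ elsewhere. I would then evaluate the eigenvalue of $X^* - \frac{1}{q}\mathds{1}_{q \times q}$ on each of these three common eigenspaces. I expect the eigenvalues on $\mathds{1}_q$ and on the eigenspace with $A$-eigenvalue $\frac{-1+\sqrt{q}}{2}$ to collapse to $0$, while on the eigenspace with $A$-eigenvalue $\frac{-1-\sqrt{q}}{2}$ the eigenvalue should simplify to $\frac{2}{\sqrt{q}+1} > 0$.

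The main obstacle is the algebraic simplification of these three eigenvalues; the cancellations rely on repeatedly using the factorizations $q - 1 = (\sqrt{q}-1)(\sqrt{q}+1)$ and $q + \sqrt{q} = \sqrt{q}(\sqrt{q}+1)$ to clear denominators. Once nonnegativity is established, the Schur complement gives $M \succeq 0$, so $(x^*, X^*)$ is feasible for~\eqref{theta_1}; combined with the objective calculation above and Observation~\ref{theta_paley_graphs}, optimality follows.
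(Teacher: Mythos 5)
Your proof is correct, and I verified the eigenvalue claims: writing $X^* - x^*(x^*)^T = X^* - \tfrac{1}{q}\mathds{1}_{q\times q}$ as a combination of $I_q$, $\mathds{1}_{q\times q}$ and $A$, the eigenvalues on the three common eigenspaces are indeed $0$ (on $\mathds{1}_q$), $0$ (on the $\tfrac{-1+\sqrt{q}}{2}$-eigenspace of $A$) and $\tfrac{2}{\sqrt{q}+1}$ (on the $\tfrac{-1-\sqrt{q}}{2}$-eigenspace), so the Schur-complement step goes through and feasibility plus the objective value $\sqrt{q}=\vartheta(P_q)$ gives optimality. Your route is genuinely different from the paper's: the paper passes to formulation~\eqref{theta_2} by normalizing $X' = \tfrac{1}{\sqrt{q}}X^*$, proves only $X^* \succeq 0$ by decomposing $X^* = \Diag(x^*) + \tfrac{2}{q+\sqrt{q}}\overline{A}$ and reading off the eigenvalues $1$, $\tfrac{2}{1+\sqrt{q}}$, $0$ from the spectrum of the complement's adjacency matrix, computes $\langle \mathds{1}_{q\times q}, X'\rangle = \sqrt{q}$ via row sums, and then invokes Lemma~\ref{opt_2} to transfer optimality back to~\eqref{theta_1}. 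You instead stay entirely within~\eqref{theta_1} and verify the bordered positive-semidefiniteness constraint directly, which requires the slightly stronger fact $X^* - x^*(x^*)^T \succeq 0$ rather than just $X^* \succeq 0$. Both arguments rest on the same spectral structure of strongly regular graphs; what yours buys is independence from the formulation-transfer machinery of Lemmas~\ref{opt_1} and~\ref{opt_2}, while the paper's version keeps the spectral computation marginally simpler by avoiding the rank-one correction. The only cosmetic gap is that your eigenvalue simplifications are announced rather than carried out, but they do check out exactly as you predict.
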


\begin{proof}
	First, we set $X^\prime = \frac{1}{\tr(X^*)}X^* = \frac{1}{\sqrt{q}}X^*$, and show that $X^\prime$ is a feasible solution for~\eqref{theta_2}. 
	
	From~\eqref{solution} we have that the constraints $\tr(X^\prime) = 1$ as well as $X^\prime_{ij} = 0$ for all edges~$\{i,j\}$ are satisfied. What is left to show is that $X^\prime \succeq 0$. Since $\frac{1}{\sqrt{q}} > 0$, it is sufficient to show that $X^* \succeq 0$. Let $D = \Diag(x^*)$. Then, we can write the matrix~$X^*$ as
	\begin{align*}
	X^* = D + \frac{2}{q + \sqrt{q}} \overline{A},
	\end{align*}
	where $\overline{A}$ is the adjacency matrix of the complement of $P_q$. From Observations~\ref{eigenvalues_Paley} and~\ref{eigenvalue_Paley_complement} we know that $\overline{A}$ has eigenvalues $\frac{q-1}{2}$ and $\frac{-1 \pm \sqrt{q}}{2}$. Since $D$ is a diagonal matrix with elements equal to $x^*_i$, the only eigenvalue of $D$ is $\frac{1}{\sqrt{q}}$. Therefore, the eigenvalues of the matrix $X^*$ are $1$, $\frac{2}{1 + \sqrt{q}}$ and $0$. Hence, the matrix $X^*$ is positive semidefinite, and thus $X^\prime$ is feasible for~\eqref{theta_2}. 
 
    Next, we note that every vertex $i \in P_q$ has $\frac{q - 1}{2}$ neighbors as well as $\frac{q - 1}{2}$ non-neighbors. Therefore, the sum of the $i$-th row of the matrix $X^*$ is
    \begin{align*}
    \frac{1}{\sqrt{q}} + \frac{q - 1}{2} \frac{2}{q + \sqrt{q}} = 1.
    \end{align*}
    Since there are $q$ rows in the matrix $X^*$, the sum of all elements in $X^*$ is $q$. Hence, the objective function value of~\eqref{theta_2} of the solution $X^\prime$ is 
	\begin{align*}
		\langle \mathds{1}_{q \times q}, X^\prime \rangle = \sum_{i=1}^{q} \sum_{j = 1}^q X^\prime_{ij} = \frac{1}{\sqrt{q}}\sum_{i = 1}^q \sum_{j = 1}^q X^*_{ij} =  \frac{1}{\sqrt{q}} q = \sqrt{q}.
	\end{align*}

	From Observation~\ref{theta_paley_graphs} we know that $\vartheta(P_q) = \sqrt{q}$. Therefore, $X^\prime$ is an optimal solution of~\eqref{theta_2} and since $X^*~=~\langle \mathds{1}_{q \times q}, X^\prime \rangle X^\prime$ we know from Lemma~\ref{opt_2} that $(x^*, X^*)$ is an optimal solution of~\eqref{theta_1}.
\end{proof}

\subsection{No improvement up to a certain level}\label{subsection_ESH_Paley}

We now investigate bounds for the stability numbers of Paley graphs that can be obtained by using the ESH approach as described in Section~\ref{method_ESH}. 

For the start, we consider the second and the third level of the ESH, so  $z_k(P_q)$ for $k = 2$ and $k = 3$. For an arbitrary graph $G$, the ESC for a subgraph $G_I$ with~$I = \{i, j\}$ (and thus of order $k = 2$), is equivalent to
\begin{subequations}\label{ESC_2}
\begin{align}
	X_{ij} &\geq 0 \label{2_1} \\
	X_{ij} &\leq X_{ii} \label{2_2} \\
	X_{ij} &\leq X_{jj} \label{2_3} \\
	X_{ii} + X_{jj} &\leq 1 + X_{ij} \label{2_4},
\end{align}
\end{subequations}
while the ESC for a subgraph $G_I$ with $I = \{i, j, \ell \}$ (and thus of order $k = 3$) is equivalent to
\begin{subequations}
	\begin{align}
		X_{ij} + X_{i\ell}&\leq X_{ii} + X_{j\ell} \label{3_1} \\
		X_{ij} + X_{j\ell}&\leq X_{jj} + X_{i\ell} \label{3_2} \\
		X_{i\ell} + X_{j\ell}&\leq X_{\ell \ell} + X_{ij} \label{3_3} \\
		X_{ii} + X_{jj} + X_{\ell\ell} &\leq 1 + X_{ij} + X_{i\ell} + X_{j\ell} \label{3_4}, 
	\end{align}
\end{subequations}
as shown in~\cite{GaarVersionsESH}. 

The relationship between $\vartheta(P_q)$ and $z_k(P_q)$ for $k = 2$ and $k = 3$ can be derived directly from the optimal solution of the SDP for computing the Lovász theta function presented in Lemma~\ref{optimal_solution}, as the next two lemmas show.

\begin{lem}\label{level_2}
	Let $P_q$ be a Paley graph. Then $z_2(P_q) = \vartheta(P_q)$.
\end{lem}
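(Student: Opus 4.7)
The plan is to exploit the chain $\alpha(P_q) \le z_2(P_q) \le z_1(P_q) = \vartheta(P_q)$ from~\eqref{ineq:relationship_z}, which already gives the inequality $z_2(P_q) \le \vartheta(P_q)$. To obtain the reverse inequality I would show that the explicit optimum $(x^*, X^*)$ of the SDP~\eqref{theta_1} constructed in Lemma~\ref{optimal_solution} is in fact also feasible for the SDP defining $z_2(P_q)$. Since this solution already attains the objective value $\mathds{1}_q^T x^* = q \cdot \tfrac{1}{\sqrt{q}} = \sqrt{q} = \vartheta(P_q)$, its feasibility would give $z_2(P_q) \ge \vartheta(P_q)$ and hence equality.

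The only thing to check is therefore that $(x^*, X^*)$ satisfies the ESC~\eqref{ESC_2} for every pair $\{i,j\} \subseteq V_q$. I would split this into two cases according to the two distinct values that $X^*_{ij}$ takes off the diagonal, namely $0$ when $\{i,j\} \in E_q$ by~\eqref{sol_3} and $\tfrac{2}{q+\sqrt{q}}$ when $\{i,j\} \notin E_q$ by~\eqref{sol_4}. In both cases the diagonal entries are constantly $X^*_{ii} = X^*_{jj} = \tfrac{1}{\sqrt{q}}$ by~\eqref{sol_2} and~\eqref{sol_1}, so the verification reduces to checking the four scalar inequalities~\eqref{2_1}--\eqref{2_4} for each of the two pairs of values.

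For $\{i,j\} \in E_q$ the inequalities~\eqref{2_1}--\eqref{2_3} are trivial, and~\eqref{2_4} becomes $\tfrac{2}{\sqrt{q}} \le 1$, which is satisfied because the smallest Paley graph is $P_5$ and hence $q \ge 5$. For $\{i,j\} \notin E_q$, \eqref{2_1} is immediate from positivity, \eqref{2_2} and~\eqref{2_3} both reduce, after multiplying out, to $\sqrt{q}+1 \ge 2$, and~\eqref{2_4} becomes, after clearing the common denominator $q+\sqrt{q}$, the inequality $2\sqrt{q}+2 \le q+\sqrt{q}+2$, i.e.\ $\sqrt{q} \le q$; both hold for every $q \ge 1$.

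Because there is no deep obstacle, the main point to be careful about is the edge case of~\eqref{2_4} on an edge, which is tight up to a factor involving $\sqrt{q}$ and genuinely uses $q \ge 4$; this is precisely where one might worry that small Paley graphs could spoil the argument, but it is covered by $q \ge 5$. Once all four inequalities are verified in both cases, $(x^*,X^*)$ is feasible for the order-$2$ ESH relaxation, which together with~\eqref{ineq:relationship_z} yields $z_2(P_q) = \vartheta(P_q)$.
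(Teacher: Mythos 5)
Your proposal is correct and follows essentially the same route as the paper: both take the explicit optimal solution $(x^*,X^*)$ from Lemma~\ref{optimal_solution} and verify that it satisfies the four inequalities~\eqref{2_1}--\eqref{2_4} for every pair of vertices, so that it remains feasible (and hence optimal) for the level-$2$ SDP. Your explicit edge/non-edge case split and the observation that the edge case of~\eqref{2_4} is the binding constraint requiring $q\geq 4$ match the paper's use of $q\geq 5$ there.
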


\begin{proof}
Let $(x^*, X^*)$ be as in Lemma~\ref{optimal_solution} and
let $I = \{i,j\} \subseteq V_q$.
Then the inequality~\eqref{2_1} is satisfied, as $X^*_{ij} \geq 0$ by the definition of $X^*$. Furthermore, for all $q$ we have that $\frac{2}{q + \sqrt{q}} \leq \frac{1}{\sqrt{q}}$ and $ 0 \leq \frac{1}{\sqrt{q}}$, so the inequalities~\eqref{2_2} and~\eqref{2_3} also hold for~$X^*$. Finally, since $q \geq 5$, it follows that $X^*_{ii} + X^*_{jj} = x^*_i + x^*_j = \frac{2}{\sqrt{q}} \leq 1$, so the inequality~\eqref{2_4} is satisfied. Hence, adding ESCs for subgraphs of order~$2$ will not improve the Lovász theta function. 
\end{proof}

\begin{lem}\label{level_3}
	Let $P_q$ be a Paley graph with $q\geq 9$. Then $z_3(P_q) = \vartheta(P_q)$.
\end{lem}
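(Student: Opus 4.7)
The plan is to mimic the proof of Lemma~\ref{level_2} exactly: take the explicit optimal solution $(x^*, X^*)$ of \eqref{theta_1} from Lemma~\ref{optimal_solution}, and verify that for every triple $I=\{i,j,\ell\}\subseteq V_q$ the four inequalities \eqref{3_1}--\eqref{3_4} hold. Since $(x^*, X^*)$ already certifies $\vartheta(P_q)=\sqrt{q}$, showing it remains feasible after adding all ESCs of order $3$ gives $z_3(P_q) \geq \sqrt{q}=\vartheta(P_q)$, which together with the chain \eqref{ineq:relationship_z} yields the desired equality.

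The key simplification is that $X^*$ takes only three distinct values: $X^*_{ii}=\alpha:=\tfrac{1}{\sqrt{q}}$ on the diagonal, $X^*_{ij}=0$ whenever $\{i,j\}\in E_q$, and $X^*_{ij}=\beta:=\tfrac{2}{q+\sqrt{q}}$ whenever $i\neq j$ and $\{i,j\}\notin E_q$. I would first note that $0 \le \beta \le \alpha$, and then distinguish four cases for a triple $\{i,j,\ell\}$ according to how many of the three pairs are non-edges in $P_q$ (so, $0$, $1$, $2$, or $3$ non-edges).

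Since \eqref{3_1}--\eqref{3_3} are symmetric in $i,j,\ell$, it suffices to find the worst case of a single representative, say \eqref{3_1}, which is equivalent to $X^*_{ij}+X^*_{i\ell}-X^*_{j\ell}\leq \alpha$. Because each off-diagonal entry is in $\{0,\beta\}$, the left-hand side is maximized when $X^*_{ij}=X^*_{i\ell}=\beta$ and $X^*_{j\ell}=0$, giving the sufficient condition $2\beta \leq \alpha$, that is, $\tfrac{4}{\sqrt{q}(\sqrt{q}+1)}\leq \tfrac{1}{\sqrt{q}}$, which holds exactly when $\sqrt{q}\geq 3$, i.e.\ $q\geq 9$. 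For \eqref{3_4}, the left-hand side is always $3\alpha$ and the right-hand side is at least $1$ (attained when $\{i,j,\ell\}$ induces a triangle in $P_q$, making all three off-diagonal summands equal to~$0$), so it reduces to $3\alpha \leq 1$, i.e.\ $\tfrac{3}{\sqrt{q}}\leq 1$, again equivalent to $q\geq 9$.

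There is no real obstacle beyond bookkeeping: both the triangle-type constraints and the clique-type constraint~\eqref{3_4} become tightest in the same direction and give the same threshold $q\geq 9$, so the hypothesis of the lemma is exactly what is needed. I would present the argument by stating the two worst-case reductions $2\beta\leq \alpha$ and $3\alpha\leq 1$, verifying each is equivalent to $q\geq 9$, and concluding that $(x^*,X^*)$ is feasible for the level-$3$ SDP, hence optimal for it as well.
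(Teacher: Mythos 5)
Your proposal is correct and follows essentially the same route as the paper: both verify that the explicit optimal solution $(x^*,X^*)$ from Lemma~\ref{optimal_solution} satisfies \eqref{3_1}--\eqref{3_4} via the same worst-case estimates $X^*_{ij}+X^*_{i\ell}\leq \frac{4}{q+\sqrt{q}}\leq\frac{1}{\sqrt{q}}$ and $\frac{3}{\sqrt{q}}\leq 1$, each reducing to the threshold $q\geq 9$.
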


\begin{proof}
Let $(x^*, X^*)$ be as in Lemma~\ref{optimal_solution} and 
let $I = \{i,j,\ell\} \subseteq V_q$. 
Then, for the values of $x^*$ and $X^*$ given in Lemma~\ref{optimal_solution}, the inequality
\begin{align*}
X^*_{ij} + X^*_{i\ell} \leq \frac{4}{q + \sqrt{q}} \leq \frac{1}{\sqrt{q}} \leq
x^*_i + X^*_{j\ell} = X^*_{ii} + X^*_{j\ell} 
\end{align*}
holds if $q \geq 9$. Therefore,~\eqref{3_1},~\eqref{3_2} and~\eqref{3_3} hold for $P_q$ with $q \geq 9$. 
Furthermore,
\begin{align*}
X^*_{ii} + X^*_{jj} + X^*_{\ell \ell} = x^*_i + x^*_j + x^*_\ell = \frac{3}{\sqrt{q}} \leq 1   
\leq 1 + X^*_{ij} + X^*_{i\ell} + X^*_{j\ell}
\end{align*}
holds if $q \geq 9$, so~\eqref{3_4} holds.
As a result, $z_3(P_q) = \vartheta(P_q)$ holds for $P_q$ with~$q \geq 9$.
\end{proof}

From Lemmas~\ref{level_2} and \ref{level_3}, we can note that the answer to the question whether $z_k(P_q) = \vartheta(P_q)$ arises from the construction of the optimal solution $(x^*, X^*)$ as stated in Lemma~\ref{optimal_solution}, and thus strongly depends on the value of~$q$. We now consider ESH of an arbitrary level~$k$ for $P_q$ with $q \geq 9$. Since the listing of all possible ESCs is too exhaustive, we consider the convex hulls of subgraphs of fixed order~$k$. Such an approach will enable us to determine levels $k$ of ESH for which $z_k(P_q) = \vartheta(P_q)$. We start by giving the following statement. 

\begin{lem}\label{level_k}
	Let $P_q$ be a Paley graph with $q\geq 9$, let $k \in \mathbb{N}$, let $J_k = \{I \subseteq V_q: \vert I \vert = k\}$ and let $(x^*, X^*)$ be as in Lemma~\ref{optimal_solution}. If $k \leq \frac{\sqrt{q} + 3}{2}$, then 
    $X^{*}_I \in \STAB^{2}(P_{q_I})$ for all~$I \in J_k$.
\end{lem}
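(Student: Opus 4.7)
The plan is to exhibit $X^{*}_I$ explicitly as a convex combination of rank-one matrices $ss^T$ where $s$ ranges over incidence vectors of stable sets in $P_q[I]$. Observe that $X^{*}_I$ inherits the structure of $X^*$: every diagonal entry equals $\frac{1}{\sqrt{q}}$, every entry corresponding to an edge of $P_q[I]$ equals $0$, and every entry corresponding to a non-edge of $P_q[I]$ equals $\frac{2}{q+\sqrt{q}}$. Because edge positions must vanish, only vectors $s$ supported on independent sets of $P_q[I]$ can appear with positive weight.

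I would first try the simplest possible combination, using only three kinds of stable sets: the empty set (contributing the zero matrix), singletons $\{v\}$ (contributing $e_v e_v^T$), and non-edge pairs $\{i,j\}\subseteq I$ (contributing $(e_i+e_j)(e_i+e_j)^T$). Writing $\bar d_I(v)$ for the number of vertices in $I\setminus\{v\}$ that are non-neighbors of $v$ in $P_q$, and $\bar m_I$ for the number of non-edges of $P_q[I]$, the off-diagonal structure forces the pair coefficients: I would assign weight $\mu_{ij} = \frac{2}{q+\sqrt{q}}$ to each non-edge pair, which immediately matches the non-edge off-diagonal entries. The singleton weights are then determined by the diagonal requirement, namely $\lambda_v = \frac{1}{\sqrt{q}} - \bar d_I(v) \cdot \frac{2}{q+\sqrt{q}}$, and the residual mass $1 - \sum_v \lambda_v - \sum_{ij}\mu_{ij}$ is placed on the empty stable set.

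The argument then reduces to verifying two non-negativity conditions. For $\lambda_v \geq 0$, I would bound $\bar d_I(v) \leq k-1$; the resulting sufficient condition $\frac{1}{\sqrt{q}} \geq (k-1)\cdot \frac{2}{q+\sqrt{q}}$ rearranges cleanly to $\sqrt{q}+1 \geq 2(k-1)$, equivalently $k \leq \frac{\sqrt{q}+3}{2}$, which is precisely the hypothesis. For the coefficient on the empty stable set, a short calculation using $\sum_{v\in I}\bar d_I(v) = 2\bar m_I$ gives $\sum_v \lambda_v + \sum_{ij}\mu_{ij} = \frac{k}{\sqrt{q}} - \bar m_I \cdot \frac{2}{q+\sqrt{q}} \leq \frac{k}{\sqrt{q}}$, and this is at most $1$ because $k \leq \frac{\sqrt{q}+3}{2} \leq \sqrt{q}$ whenever $q \geq 9$; this is precisely where the standing assumption $q \geq 9$ enters.

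The main obstacle, and the reason the threshold is sharp, is the non-negativity of the singleton weights: a vertex in $I$ may have up to $k-1$ non-neighbors in $I$, each of which drains mass $\frac{2}{q+\sqrt{q}}$ from $\lambda_v$, and the critical value of $k$ at which this mass is exhausted is exactly $\frac{\sqrt{q}+3}{2}$. A pleasant feature of this approach is that no stable sets of size $\geq 3$ are needed, so no further case analysis on the combinatorial structure of $P_q[I]$ is required.
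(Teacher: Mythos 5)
Your proposal is correct and follows essentially the same route as the paper's proof: the same decomposition into the empty set, singletons, and non-edge pairs, with the pair weights forced to $\tfrac{2}{q+\sqrt{q}}$, the singleton weights determined by the diagonal, the bound $\bar d_I(v)\le k-1$ giving exactly the threshold $k\le\tfrac{\sqrt{q}+3}{2}$, and the residual mass bounded via $\tfrac{k}{\sqrt{q}}\le 1$ using $q\ge 9$. No substantive differences.
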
	

\begin{proof}
    Let $k \leq \frac{\sqrt{q} + 3}{2}$ and let $I =\{v_1, \ldots, v_k\} \in J_k$, i.e., $I \subseteq V_q$ with $\vert I \vert = k$. 
    We have to show that $X^*_I \in \STAB^2(P_{q_I})$ holds. 

    To do so, 
    it is enough to show that 
 there exist non-negative  $\lambda$, $\mu_i$ for all~$1 \leq i \leq k$ 
 and $\nu_{ij}$ for all~ $1\leq i < j \leq k$ 
 such that
	\begin{align}
		\lambda + \sum_{i = 1}^{k} \mu_i + \sum_{i = 1}^{k - 1} \sum_{j = i + 1}^{k} \nu_{ij} = 1 \label{sum_is_one}
	\end{align}
	and 
	\begin{align}
		X^*_I = \lambda 0_k + \sum_{i = 1}^{k} \mu_i E_i + \sum_{i = 1}^{k - 1} \sum_{j = i + 1}^{k} \nu_{ij} E_{ij} \label{matrix_X}
	\end{align}	
    hold, and where $\nu_{ij} = 0$ for all $1\leq i < j \leq k$ with $\{v_i,v_j\} \in E_q$.
 We set
	\begin{alignat}{3}
		\nu_{ij} &= X^*_{v_i,v_j} &&\quad 1\leq i < j \leq k, \nonumber \\
		\mu_i &= x^*_{v_i} - \sum_{\substack{j=1 \\ j \neq i}}^{k} \nu_{{ij}} && \quad 1 \leq i \leq k, \label{coefficient_mu}\\
        \lambda &= 1 - \sum_{i = 1}^{k} \mu_i - \sum_{i = 1}^{k - 1} \sum_{j = i + 1}^{k} \nu_{ij}, \nonumber
	\end{alignat}
	so~\eqref{sum_is_one} is clearly satisfied by construction. Furthermore, \eqref{matrix_X} and $\nu_{ij} = 0$ for all~$1\leq i < j \leq k$ with $\{v_i,v_j\} \in E_q$ hold because of~\eqref{sol_2} and~\eqref{sol_3}, respectively.
 
    From~\eqref{solution} we have that $X^*_{v_i,v_j}$ can either be equal to $0$ or equal to $\frac{2}{q + \sqrt{q}}$; therefore, all $\nu_{ij}$ are non-negative. 
    Regarding $\mu_i$ computed with~\eqref{coefficient_mu}, 
    for a fixed $i$ there are at most $k-1$ values $\nu_{ij}$ 
    equal to $\frac{2}{q + \sqrt{q}}$. Thus,
	\begin{align*}
		\mu_i \geq \frac{1}{\sqrt{q}} - (k-1)\frac{2}{q + \sqrt{q}} = \frac{\sqrt{q} + 3 - 2k}{q + \sqrt{q}},
	\end{align*}
	and since $k \leq \frac{\sqrt{q} + 3}{2}$, all $\mu_i$ are non-negative. 
 
 What is left to show is that $\lambda$ is non-negative. The matrix $X^*_I$ is symmetric, so $X^*_{v_i,v_j} = X^*_{v_j,v_i}$ for all $1 \leq i < j \leq k$. Furthermore, we have that $k \leq \frac{\sqrt{q} + 3}{2}$ and~$q\geq 9$. Therefore,
	\begin{align*}
		\sum_{i = 1}^{k} \mu_i + \sum_{i = 1}^{k - 1} \sum_{j = i + 1}^{k} \nu_{ij} &= \sum_{i = 1}^{k} x^*_{v_i} - \sum_{i = 1}^k \sum_{\substack{j=1 \\ j \neq i}}^{k} X^*_{v_i,v_j} + \sum_{i = 1}^{k - 1} \sum_{j = i + 1}^{k} X^*_{v_i,v_j} \\
		&= \sum_{i = 1}^{k} x^*_{v_i} - 2\sum_{i = 1}^{k - 1} \sum_{j = i + 1}^{k} X^*_{v_i,v_j} + \sum_{i = 1}^{k - 1} \sum_{j = i + 1}^{k} X^*_{v_i,v_j} \\
		&= \sum_{i = 1}^{k} x^*_{v_i} - \sum_{i = 1}^{k - 1} \sum_{j = i + 1}^{k} X^*_{v_i,v_j} 
		\leq \sum_{i = 1}^{k} x^*_{v_i} 
        = k\frac{1}{\sqrt{q}} 
        \leq 1.
	\end{align*}
	Hence, $\lambda$ is non-negative, which finishes the proof. 
\end{proof}

Note that in the proof of Lemma~\ref{level_k} we show that $X^*_I$ is in 
$\STAB^{2}(P_{q_I})$ by showing that it is a convex combination of outer products of incidence vectors of stable sets of at most $2$ vertices. We do not utilize the outer product of incidence vectors of stable sets of more vertices, even though they might exist in $P_q$. The following theorem is a direct consequence of Lemmas~\ref{level_2} and~\ref{level_k}.

\begin{thm}\label{thm_level}
	Let $P_q$ be a Paley graph and let $k  \in \mathbb{N}_0$ with $k \leq \frac{\sqrt{q} + 3}{2}$. Then~$z_k(P_q) = \vartheta(P_q)$.
\end{thm}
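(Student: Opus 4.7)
The plan is to show that the optimal solution $(x^*, X^*)$ constructed in Lemma~\ref{optimal_solution} remains feasible once we append the ESCs for all subgraphs of order $k$. Since this solution already has objective value $\vartheta(P_q)$, its feasibility would immediately yield $z_k(P_q) \geq \vartheta(P_q)$. Combined with the monotonicity chain \eqref{ineq:relationship_z}, which gives $z_k(P_q) \leq \vartheta(P_q)$, equality follows.

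First I would dispatch the trivial small cases. For $k \in \{0,1\}$ the claim is immediate from \eqref{ineq:relationship_z}, which explicitly states $z_1(G) = z_0(G) = \vartheta(G)$ for every graph. For $k = 2$, Lemma~\ref{level_2} directly gives the result for every Paley graph, without any restriction on $q$.

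For $k \geq 3$, I would argue that the hypothesis $k \leq \frac{\sqrt{q}+3}{2}$ forces $\sqrt{q} \geq 2k - 3 \geq 3$, hence $q \geq 9$. This places us inside the range of applicability of Lemma~\ref{level_k}, which asserts that $X^*_I \in \STAB^{2}(P_{q_I})$ for every $I \subseteq V_q$ with $\vert I \vert = k$. Thus $(x^*, X^*)$ satisfies every ESC appearing in the definition of $z_k(P_q)$, so it is feasible for the SDP defining $z_k(P_q)$. Its objective value is $\mathds{1}_q^T x^* = \sqrt{q} = \vartheta(P_q)$ by Observation~\ref{theta_paley_graphs}, giving $z_k(P_q) \geq \vartheta(P_q)$.

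Combining with $z_k(P_q) \leq \vartheta(P_q)$ from \eqref{ineq:relationship_z} closes the argument. There is no real obstacle here: all the substantive work has been done in Lemmas~\ref{level_2} and~\ref{level_k}, and the theorem is essentially a bookkeeping assembly that partitions the range of $k$ into the trivial cases $k \in \{0,1\}$, the universal case $k = 2$, and the regime $k \geq 3$ where the constraint $k \leq \frac{\sqrt{q}+3}{2}$ automatically supplies the condition $q \geq 9$ needed by Lemma~\ref{level_k}.
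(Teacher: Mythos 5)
Your proof is correct and follows essentially the same route as the paper: both reduce the theorem to Lemma~\ref{level_2} and Lemma~\ref{level_k} together with the monotonicity chain \eqref{ineq:relationship_z}, the only difference being that you organize the case split by $k$ (with the observation that $k\geq 3$ and $k\leq\frac{\sqrt{q}+3}{2}$ force $q\geq 9$) whereas the paper splits by $q=5$ versus $q\geq 9$. Your version also spells out explicitly that feasibility of $(x^*,X^*)$ with objective value $\sqrt{q}$ yields $z_k(P_q)\geq\vartheta(P_q)$, which the paper leaves implicit.
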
	

\begin{proof}
If $q = 5$, then $\left\lfloor\frac{\sqrt{q}+3}{2}\right\rfloor = 2$ and $z_2(P_q) = \vartheta(P_q)$ due to Lemma~\ref{level_2}. Thus, the statement holds because of \eqref{ineq:relationship_z}. 
If $q \geq 9$, then the statement follows directly from Lemma~\ref{level_k} and \eqref{ineq:relationship_z}.
\end{proof}

For a Paley graph $P_q$, we define 
\begin{align*}
\ell(q) = \left\lfloor\frac{\sqrt{q}+3}{2}\right\rfloor.
\end{align*}
Clearly, it follows from Theorem~\ref{thm_level} that adding ESCs for subgraphs of orders~$\{2, \dots, \ell(q)\}$ does not improve the Lovász theta function as bound on~$\alpha(P_q)$, so the next corollary holds.

\begin{cor}\label{cor_level}
	Let $P_q$ be a Paley graph.
    Then $z_k(P_q) = \vartheta(P_q)$ holds for all~$k  \in \mathbb{N}_0$ with $k \leq \ell(q)$.
\end{cor}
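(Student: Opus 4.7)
The plan is to derive the corollary as an immediate consequence of Theorem~\ref{thm_level}, since the corollary merely packages Theorem~\ref{thm_level} using the explicit notation $\ell(q) = \lfloor(\sqrt{q}+3)/2\rfloor$. The key observation is that, by definition of the floor function, $\ell(q) \leq (\sqrt{q}+3)/2$, and therefore any integer $k \in \mathbb{N}_0$ with $k \leq \ell(q)$ automatically satisfies $k \leq (\sqrt{q}+3)/2$, which is exactly the hypothesis of Theorem~\ref{thm_level}.

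Concretely, I would structure the proof as follows. First, fix an arbitrary $k \in \mathbb{N}_0$ with $k \leq \ell(q)$. Then, since $k$ is an integer and $\ell(q) = \lfloor(\sqrt{q}+3)/2\rfloor \leq (\sqrt{q}+3)/2$, we obtain $k \leq (\sqrt{q}+3)/2$. Applying Theorem~\ref{thm_level} with this $k$ yields $z_k(P_q) = \vartheta(P_q)$, which is precisely the desired conclusion. Since $k$ was arbitrary in the allowed range, the corollary follows.

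There is essentially no obstacle here; the corollary is a reformulation of Theorem~\ref{thm_level} with the maximal integer level made explicit via $\ell(q)$. The only thing worth briefly remarking on, if desired, is that the statement trivially covers the edge cases $k = 0$ and $k = 1$, for which $z_0(P_q) = z_1(P_q) = \vartheta(P_q)$ holds by the chain~\eqref{ineq:relationship_z} independently of Theorem~\ref{thm_level}, so these cases need no separate treatment.
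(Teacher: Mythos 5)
Your proposal is correct and takes essentially the same route as the paper, which simply derives the corollary as an immediate consequence of Theorem~\ref{thm_level} via the observation that $k \leq \ell(q) = \lfloor(\sqrt{q}+3)/2\rfloor$ implies $k \leq (\sqrt{q}+3)/2$. Nothing further is needed.
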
	

So far, we do not know anything about adding ESCs for subgraphs of order~$\ell(q) +1$ besides in the case when $q$ is square with Observation~\ref{theta_exact_q_square}. For some Paley graphs, we might have that $z_{\ell(q)}(P_q) = z_{\ell(q)+1}(P_q)$, while for others, we might have that adding ESCs of size $\ell(q) + 1$ yield bounds that are tighter than the Lovász theta function. In the next section, we will perform computational experiments and empirically determine levels for which imposing ESCs improves the Lovász theta function of a Paley graph~$P_q$. Additionally, we will confirm computationally that adding ESCs for subgraphs of order up to $\ell(q)$ does not yield tighter bounds.

\subsection{Computational study}\label{comp_justification}

Next, we want to investigate if there is an improvement of the Lovász theta function as bound on $\alpha(P_q)$ when including ESCs for subgraphs of orders $\ell(q) + 1$ and higher. To do so, we first conduct a computational study 
and compute upper bounds on $z_k(P_q)$ for $k \in \{2, \dots, 10\}$ and $q < 200$.
Note that we do not consider any $q$ that is a square in this computational study because we already know what happens on all levels of the ESH due to  Observation~\ref{theta_exact_q_square}.


In order to compute upper bounds on $z_k(P_q)$ with MATLAB R2022b we use the source code~\href{https://gitlab.com/egaar/tightenthetafunction}{https://gitlab.com/egaar/tightenthetafunction}, which accompanies~\cite{Gaa:18} and lead to the publications~\cite{Gaa:20,GaarSiebenhoferWiegeleStabBaB}.
We call the function \texttt{tightenThetaFunction()} for the \texttt{adjacencyMatrix} of the Paley graph~$P_q$, which iteratively adds 
violated ESC for subgraphs of order~$k$ in each of $\texttt{nrIterations} = 10$ cycles and thus computes an upper bound on $z_k(P_q)$ by computing $z_J(P_q)$ for growing sets $J$ of subgraphs of order~$k$. To find violated ESC to add to $J$, in every cycle the function performs a simulated annealing heuristic for $\texttt{nrExecutionsSimAnn} = 4000$ times, where it tries to minimize the inner product between $X_I$ and several different separation matrices (stored in $\texttt{usedSeparationMatricesPerRun}$) determined with the function \texttt{setupSeparation\-Matrices()}
with all ($\texttt{mySMVersion} = 7$, $\texttt{usedMethodToChangeNrSimAnnPerSepMat} = 1$) possible separation matrices from the function \texttt{setupPossibleSeparationMatrices()}, which includes extreme copositive matrices with entries in $\{-1,0,1\}$, facet inducing matrices of the squared stable set polytope and random matrices, see~\cite{Gaa:18,Gaa:20} for more details.
Then the code includes the $\texttt{maxNr\-NewViolatedSubgraphsPer}$ $\texttt{Iteration} = 1000$ most violated ESC found by adding one inequality constraint inducing a separating hyperplane as described in~\cite{GaarSiebenhoferWiegeleStabBaB} 
(\texttt{usedCutting\-Planes\-PerRun} $= 3$). Each SDP is solved with MOSEK~10.0.~\cite{mosek} ($\texttt{usedSolver}$ $\texttt{PerRun} = 7$). All computations were done on an Intel(R) Core(TM) i7-1260P CPU @ 2.10GHz with 64GB of RAM, operated under Windows 10.  The code and the instances are available as ancillary files from the arXiv page of this paper: \href{https://arxiv.org/abs/arXiv:2412.12958}{arXiv:2412.12958}.

The results, shown in Table~\ref{Table_2}, are organized as follows. First, we give basic information for every considered Paley graph $P_q$: the value of $q$, the stability number $\alpha(P_q)$, the value of the Lovász theta function $\vartheta(P_q)$, and the value of $\ell(q)$. Note that the values of $\alpha(P_q)$ are taken from~\cite{Gvozdenovic2009} for $61 \leq q < 200$, where $q$ is a prime, and from \url{https://aeb.win.tue.nl/graphs/Paley.html} for $q < 61$ and $q = 125$. Furthermore, we report computed upper bounds on $z_k(P_q)$ for $k \in \{2, \dots, 10\}$. The respective cell for $z_k(P_q)$ is shaded in gray if $k \leq \ell(q)$. If for some $P_q$ there exists a level~$k > \ell(q)$ such that the upper bound on $z_k(P_q)$ is equal to $\vartheta(P_q)$, we highlight by boldface the value in the respective cell. 
Finally, based on the computed data, we determine the empirical maximum level~$\tilde{\ell}(q)$ for which adding the ESCs does not improve the Lovász theta function as bound on the stability number, i.e., the upper bound on $z_{\tilde{\ell}(q)}$ is equal to $\vartheta(P_q)$ and the upper bound on $z_{\tilde{\ell}(q)+1}$ is less than $\vartheta(P_q)$, so on level~$\tilde{\ell}(q)+1$ there is the first improvement over the Lovász theta function.

Note that as we only compute upper bounds on~$z_k(P_q)$ by computing $z_J(P_q)$ for some set~$J$, it is possible that for some values of $q$ the values of the upper bounds on $z_k(P_q)$ increase for higher values of $k$, even though the actual values of $z_k(P_q)$ decrease. 
Whenever we know that the computed upper bound on $z_k(P_q)$ is larger than $z_k(P_q)$ (because for some $k'<k$ the computed upper bound on $z_{k'}(P_q)$ is lower than the computed upper bound on $z_{k}(P_q)$), we highlight the corresponding upper bound on $z_k(P_q)$ in italic in Table~\ref{Table_2}. This non-monotonic behavior of the upper bounds on $z_k(P_q)$ could appear because for the values of $k$ up to $6$ a lot of investigation has been done in cleverly choosing separation matrices for finding violated ESC and thus the computed upper bounds on $z_k(P_q)$ will be more likely close to the actual values of $z_k(P_q)$; see~\cite{Gaa:18} for more details.
Due to the fact that we computed only upper bounds on $z_k(P_q)$ for determining the empirical level~$\tilde{\ell}(q)$, it is possible that actually there is already improvement of the bound over the Lovász theta function on level~$\tilde{\ell}(q)$ or lower.


\begin{table}[hbt!]
	\caption{Computed upper bounds on $z_k(P_q)$ 
 for Paley graphs $P_q$ 
 }
	\centering
    \begin{adjustbox}{max width=\textwidth}
	\begin{tabular}{ r r r | r r r r r r r r r r r}
		$q$ & $\alpha(P_q)$ & $\vartheta(P_q)$ & $\ell(q)$ & $\tilde{\ell}(q)$ & $z_2(P_q)$ & $z_3(P_q)$ & $z_4(P_q)$ & $z_5(P_q)$ & $z_6(P_q)$ & $z_7(P_q)$ &$z_8(P_q)$ & $z_9(P_q)$ & $z_{10}(P_q)$ \\
		\hline
        5 & 2 & 2.2361 & 2 & 2 & \cellcolor{lightgray} 2.2361 & 2.0000 & 2.0000 & 2.0000 & - & - & - & - & - \\
        13 & 3 & 3.6056 & 3 & 3 & \cellcolor{lightgray} 3.6056 & \cellcolor{lightgray} 3.6056 & 3.0000 & 3.0000 & 3.0000 & 3.0000 & 3.0000 & 3.0000 & 3.0000 \\
        17 & 3 & 4.1231 & 3 & 3 & \cellcolor{lightgray} 4.1231 & \cellcolor{lightgray} 4.1231 & 3.6651 & 3.6646 & 3.6411 & 3.6404 & 3.2846 & 3.1512 & 3.0674 \\
        29 & 4 & 5.3852 & 4 & 4 & \cellcolor{lightgray} 5.3852 & \cellcolor{lightgray} 5.3852 & \cellcolor{lightgray} 5.3852 &  4.6187 &  4.6122 &  4.4966 &  4.4966 &  4.4963 &  4.4958 \\
        37 & 4 & 6.0828 & 4 & 4 & \cellcolor{lightgray} 6.0828 & \cellcolor{lightgray} 6.0828  & \cellcolor{lightgray} 6.0828 &  5.6048 &  \textit{5.6369} &  5.4941 &  5.4935 &  5.1477 &  \textit{5.1896} \\
        41 & 5 & 6.4031 & 4 & 4 & \cellcolor{lightgray} 6.4031 & \cellcolor{lightgray} 6.4031  & \cellcolor{lightgray} 6.4031 &  6.0702 &  \textit{6.1107} &  5.9933 &  5.9925 &  5.5562 &  \textit{5.5703} \\
        53 & 5 & 7.2801 & 5 & 5 & \cellcolor{lightgray} 7.2801 & \cellcolor{lightgray} 7.2801 & \cellcolor{lightgray} 7.2801 & \cellcolor{lightgray} 7.2801 &  6.9560 &  6.1956 &  6.1915 &  \textit{6.1945} &  \textit{6.1942} \\
		61 & 5 & 7.8102 & 5 & 5 & \cellcolor{lightgray} 7.8102 & \cellcolor{lightgray} 7.8102 &  \cellcolor{lightgray} 7.8102 & \cellcolor{lightgray} 7.8102 &  7.6664 &  7.0000 &  6.9963  &  \textit{6.9988}  &  \textit{6.9978}  \\
		73 & 5 & 8.5440 & 5 & 5 & \cellcolor{lightgray} 8.5440 & \cellcolor{lightgray} 8.5440 & \cellcolor{lightgray} 8.5440 & \cellcolor{lightgray} 8.5440 &  8.5297  &  8.1970  &  8.1910  &  \textit{8.1949} &  8.1386  \\
		89 & 5 & 9.4340 & 6 & 8 & \cellcolor{lightgray} 9.4340 & \cellcolor{lightgray} 9.4340 & \cellcolor{lightgray} 9.4340 & \cellcolor{lightgray} 9.4340 & \cellcolor{lightgray} 9.4340 & \textbf{9.4340} & \textbf{9.4340} &  9.2172 &  \textit{9.2651}  \\
		97 & 6 & 9.8489 & 6 & 6 & \cellcolor{lightgray} 9.8489 & \cellcolor{lightgray} 9.8489 & \cellcolor{lightgray} 9.8489 & \cellcolor{lightgray} 9.8489 & \cellcolor{lightgray} 9.8489 &  9.3042 &  9.0777 &  \textit{9.1612} &  \textit{9.3810}\\
		101 & 5 & 10.0499 & 6 & 9 & \cellcolor{lightgray} 10.0499 & \cellcolor{lightgray} 10.0499 &  \cellcolor{lightgray} 10.0499 &  \cellcolor{lightgray} 10.0499 &  \cellcolor{lightgray} 10.0499 & \textbf{10.0499} & \textbf{10.0499} & \textbf{10.0499} &  10.0415 \\
		109 & 6 & 10.4403 & 6 & 6 & \cellcolor{lightgray} 10.4403 & \cellcolor{lightgray} 10.4403 & \cellcolor{lightgray} 10.4403 & \cellcolor{lightgray} 10.4403 & \cellcolor{lightgray} 10.4403  &  10.1091  &  10.0652  &  \textit{10.1459} &  \textit{10.3095} \\
		113 & 7 & 10.6301 & 6 & 6 & \cellcolor{lightgray} 10.6301 & \cellcolor{lightgray} 10.6301 & \cellcolor{lightgray} 10.6301 & \cellcolor{lightgray} 10.6301 & \cellcolor{lightgray} 10.6301 &  10.3927 &  10.3732 &  \textit{10.4730} &  \textit{10.5898} \\
        125 & 7 & 11.1803 & 7 & 7 & \cellcolor{lightgray} 11.1803 & \cellcolor{lightgray} 11.1803 & \cellcolor{lightgray} 11.1803 & \cellcolor{lightgray} 11.1803 & \cellcolor{lightgray} 11.1803 & \cellcolor{lightgray} 11.1803 & 10.6415 & \textit{10.7220} & \textit{10.9749} \\
        137 & 7 & 11.7047 & 7 & 7 & \cellcolor{lightgray} 11.7047 & \cellcolor{lightgray} 11.7047 & \cellcolor{lightgray} 11.7047 & \cellcolor{lightgray} 11.7047 & \cellcolor{lightgray} 11.7047 & \cellcolor{lightgray} 11.7047 &  11.2181 &  \textit{11.3131} &  \textit{11.5605} \\ 
        149 & 7 & 12.2066 & 7 & 7 & \cellcolor{lightgray} 12.2066 & \cellcolor{lightgray} 12.2066 & \cellcolor{lightgray} 12.2066 & \cellcolor{lightgray} 12.2066 & \cellcolor{lightgray} 12.2066 & \cellcolor{lightgray} 12.2066 &  11.9771 &  \textit{12.1427} & \textit{12.1907} \\
        157 & 7 & 12.5300 & 7 & 7 & \cellcolor{lightgray} 12.5300 & \cellcolor{lightgray} 12.5300 & \cellcolor{lightgray} 12.5300 & \cellcolor{lightgray} 12.5300 & \cellcolor{lightgray} 12.5300 & \cellcolor{lightgray} 12.5300 &  12.4315 &  \textit{12.5108} &  \textit{12.5259} \\
        173 & 8 & 13.1529 & 8 & 8 & \cellcolor{lightgray} 13.1529 & \cellcolor{lightgray} 13.1529 & \cellcolor{lightgray} 13.1529 & \cellcolor{lightgray} 13.1529 & \cellcolor{lightgray} 13.1529 & \cellcolor{lightgray} 13.1529 & \cellcolor{lightgray} 13.1529 &  13.0004 &  \textit{13.1152} \\
        181 & 7 & 13.4536 & 8 & $\geq 10$ & \cellcolor{lightgray} 13.4536 & \cellcolor{lightgray} 13.4536 & \cellcolor{lightgray} 13.4536 & \cellcolor{lightgray} 13.4536 & \cellcolor{lightgray} 13.4536 & \cellcolor{lightgray} 13.4536 & \cellcolor{lightgray} 13.4536  & \textbf{13.4536}  & \textbf{13.4536} \\
        193 & 7 & 13.8924 & 8 & $\geq 10$ & \cellcolor{lightgray} 13.8924 & \cellcolor{lightgray} 13.8924 & \cellcolor{lightgray} 13.8924 & \cellcolor{lightgray} 13.8924 & \cellcolor{lightgray} 13.8924 & \cellcolor{lightgray} 13.8924 & \cellcolor{lightgray} 13.8924 & \textbf{13.8924} & \textbf{13.8924} \\
        197 & 8 & 14.0357 & 8 & 8 & \cellcolor{lightgray} 14.0357 & \cellcolor{lightgray} 14.0357 & \cellcolor{lightgray} 14.0357 & \cellcolor{lightgray} 14.0357 & \cellcolor{lightgray} 14.0357 & \cellcolor{lightgray} 14.0357 & \cellcolor{lightgray} 14.0357 &  13.9961 & \textit{14.0289} \\
		\hline
	\end{tabular}
    \end{adjustbox}
	\label{Table_2}
\end{table}

From the results in Table~\ref{Table_2}, we first observe that  including ESCs on subgraphs of orders $\{2, \ldots, \ell(q)\}$ (in the gray cells) does not improve the Lovász theta function for all $22$ considered $P_q$, which aligns with Corollary~\ref{cor_level}. So our computational results fit the theoretical results obtained so far, and in particular, $\tilde{\ell}(q) \geq \ell(q)$ holds for all $P_q$.
However, for some values of $q$, we see that $\tilde{\ell}(q) > \ell(q)$ holds, i.e., there is still no improvement of the Lovász theta function also on levels of the exact subgraph hierarchy that are higher than $\ell(q)$. We will continue to investigate these cases in Section~\ref{sec_relationship_alpha}.

Generally, the considered upper bounds on the levels of the ESH yield good bounds on the stability numbers for small Paley graphs. For $P_q$ with $q \in \{5, 13\}$, the bounds are best possible and coincide with the stability number, while for $q \in \{17, 29, 41\}$ rounding of the best bounds yields the stability numbers. For $P_q$ with $q\in\{37, 53, 61,125, 149, 197\}$, the best computed bounds are better than the Lovász theta function by one integer. 

For the remaining larger Paley graphs, better integer bounds were not obtained.
We will investigate in Section~\ref{section_ESH_local} how even better bounds on the stability number of Paley graphs can be obtained.

\subsection{No improvement on higher levels}
\label{sec_relationship_alpha}

So far, we know that for a Paley graph $P_q$, there is no improvement of the Lovász theta function as bound on the stability number up to level~$\ell(q)$ of the ESH. It is the aim of this section to investigate what happens on higher levels of the ESH.

To do so, we first go back to the computational results.
For most of the considered Paley graphs $P_q$, the values of $\ell(q)$ and the computational empirical level up until there is no improvement $\tilde\ell(q)$ given in Table~\ref{Table_2} coincide, which suggests that for these graphs indeed $\ell(q)$ is the largest order of subgraphs, for which adding the ESCs does not improve the Lovász theta function as bound on the stability number. 
However, for $q \in \{89,101,181,193\}$ the values of $\tilde\ell(q)$ are larger then $\ell(q)$. 

Interestingly, the stability number $\alpha(P_q)$ appears to influence the relationship between $\ell(q)$ and $\tilde\ell(q)$. For Paley graphs where $\alpha(P_q) \geq \ell(q)$, we note that $\ell(q) = \tilde{\ell}(q)$ for all considered values of~$q$, thus adding ESCs for subgraphs of order~$\ell(q) + 1$ led to better bounds.
In contrast to that, for the four considered Paley graphs with $\alpha(P_q) < \ell(q)$, i.e., $q \in \{89,101,181,193\}$, this enhancement is not achieved, leading to $\ell(q) < \tilde{\ell}(q)$.
We visualize the values of $\alpha(P_q)$, $\ell(q)$ and $\tilde{\ell}(q)$ (where we plot the value 10 as $\tilde{\ell}(q)$ for $q \in \{181, 193\}$, even though this is actually just a lower bound on the unknown value of $\tilde{\ell}(q)$) in Figure~\ref{figure_1}.

\begin{figure}[hbt!]
	\centering 
 \caption{The values of $\alpha(P_q)$, $\ell(q)$, and $\tilde{\ell}(q)$ for Paley graphs $P_q$ 
 } 
\begin{tikzpicture}[scale=1.0]
			\begin{axis}[
				xlabel={$q$},
				xmin=0, xmax=200,
				ymin=0, ymax=11,
				xtick={25, 50, 75, 100, 125, 150, 175, 200},
				ytick={0,1, 2, 3, 4, 5, 6, 7, 8, 9, 10, 11},
				legend pos=north west,
				ymajorgrids=true,
				grid style=dashed,
				]			
				\addplot[
				color=blue,
				mark=square,
				]
				coordinates {
					(5,2) (13, 3) (17,3) (29, 4) (37, 4) (41, 5) (53, 5) (61, 5) (73, 5) (89, 5) (97, 6) (101, 5) (109, 6) (113, 7) (125, 7) (137,7) (149, 7) (157, 7) (173, 8) (181, 7) (193, 7) (197, 8)
				};
            \addlegendentry{$\alpha(P_q)$}
            \addplot[
				color=red,
				mark=triangle*,
				]
				coordinates {
					(5, 2) (13, 3)  (17,3) (29, 4) (37, 4) (41, 4) (53, 5) (61, 5) (73, 5) (89, 6) (97, 6) (101, 6) (109, 6) (113, 6) (125, 7) (137,7) (149, 7) (157, 7) (173, 8) (181, 8) (193, 8) (197, 8)
				};
            \addlegendentry{$\ell(q)$}
            \addplot[
				color=teal,
				mark=asterisk,
				]
				coordinates {
					(5, 2) (13, 3)  (17,3) (29, 4) (37, 4) (41, 4) (53, 5) (61, 5) (73, 5) (89, 8) (97, 6) (101, 9) (109, 6) (113, 6) (125, 7) (137,7) (149, 7) (157, 7) (173, 8) (181, 10) (193, 10) (197, 8)
				};
            \addlegendentry{$\tilde{\ell}(q)$}
			\end{axis}
		\end{tikzpicture}
	\label{figure_1}
\end{figure}
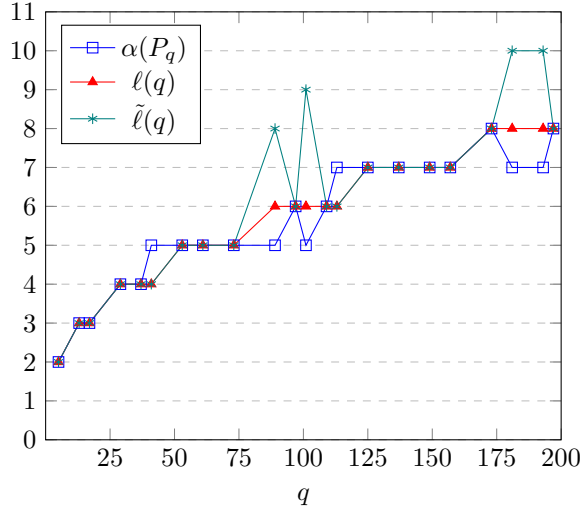

Indeed, it turns out that the fact that $\tilde{\ell}(q) > \ell(q)$ holds whenever $\alpha(P_q) < \ell(q)$ is satisfied is not a coincidence, as the following result shows.

\begin{thm}\label{thm_alpha_levelPlus1}
	Let $P_q$ be a Paley graph with $q\geq 25$.
    If $\alpha(P_q) < \ell(q)$, then~$z_{\ell(q)+1}(P_q) = z_{\ell(q)}(P_q) = \vartheta(P_q)$.
\end{thm}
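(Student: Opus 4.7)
The plan is to establish the reverse of~\eqref{ineq:relationship_z} at level $\ell(q)+1$, namely $z_{\ell(q)+1}(P_q) \geq \vartheta(P_q)$; combined with Corollary~\ref{cor_level}, this forces $z_{\ell(q)+1}(P_q) = z_{\ell(q)}(P_q) = \vartheta(P_q)$. I will verify that the explicit pair $(x^*, X^*)$ from Lemma~\ref{optimal_solution} is still feasible for the SDP defining $z_{\ell(q)+1}(P_q)$, i.e., that $X^*_I \in \STAB^{2}(P_q[I])$ for every $I \subseteq V_q$ with $|I| = \ell(q)+1$. Fix such an $I$, write $G_I = P_q[I]$, and abbreviate $\alpha = 1/\sqrt{q}$, $\beta = 2/(q+\sqrt{q})$; the argument splits according to whether $G_I$ has an isolated vertex.

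If no vertex of $I$ is isolated in $G_I$, I rerun the construction from the proof of Lemma~\ref{level_k} with $k = \ell(q)+1$. Every $v_i \in I$ has at most $\ell(q)-1$ non-neighbors in $I \setminus \{v_i\}$, hence the resulting $\mu_i$ is at least $\alpha - (\ell(q)-1)\beta \geq 0$ because $\alpha/\beta = (\sqrt{q}+1)/2 \geq \ell(q)-1$. The coefficient $\lambda = 1 - k\alpha + |E(\overline{G_I})|\beta$ is non-negative because $q \geq 25$ yields $(\ell(q)+1)/\sqrt{q} \leq 1$.

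The genuinely new case is when some $v_i \in I$ is isolated in $G_I$; the analogous $\mu_i = \alpha - \ell(q)\beta$ is then negative, so a different decomposition is needed. I exploit the Lemma~\ref{level_k} decomposition of $X^*_{I'}$ for $I' = I \setminus \{v_i\}$ (of size exactly $\ell(q)$), which is supported on stable sets $T$ of $G_{I'}$ of size at most $2$ with non-negative weights $\sigma_T$. Because $v_i$ is isolated in $G_I$, for every such $T$ the set $T \cup \{v_i\}$ is again a stable set of $G_I$. I will split each weight $\sigma_T$ into a ``keep-$T$'' part $\sigma_T - \eta_T$ and a ``move-to-$T \cup \{v_i\}$'' part $\eta_T \in [0, \sigma_T]$; this leaves the $I'$-submatrix of $X^*_I$ automatically decomposed, so the problem reduces to finding non-negative $\eta_T \leq \sigma_T$ with $\sum_T \eta_T = \alpha$ and $\sum_{T \ni v_j} \eta_T = \beta$ for each $v_j \in I'$.

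The decisive step is the explicit choice of the $\eta_T$. Let $H = \overline{G_{I'}}$ and $\Delta = \Delta(H)$. I propose $\eta_{\{v_j, v_{j'}\}} = \beta/\Delta$ for every non-edge $\{v_j, v_{j'}\}$ of $G_{I'}$, $\eta_{\{v_j\}} = \beta(1 - \deg_H(v_j)/\Delta)$ for each $v_j \in I'$, and $\eta_\emptyset = \alpha - \ell(q)\beta + |E(H)|\beta/\Delta$. The hypothesis $\alpha(P_q) < \ell(q)$ is used here to exclude $G_{I'}$ being complete (otherwise $I'$ would be a clique of size $\ell(q) > \omega(P_q) = \alpha(P_q)$ in $P_q$, using self-complementarity), so $\Delta \geq 1$ and the definition is valid. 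A direct calculation confirms the two required identities $\sum_T \eta_T = \alpha$ and $\sum_{T \ni v_j} \eta_T = \beta$. The main technical obstacle is verifying the dominations $\eta_T \in [0, \sigma_T]$; the hard inequalities all reduce to $\Delta \leq \ell(q) - 1 \leq (\sqrt{q}+1)/2 = \alpha/\beta$ together with the trivial $|E(H)| \geq \Delta$, while $\eta_\emptyset \leq \sigma_\emptyset$ once more relies on $(\ell(q)+1)\alpha \leq 1$, the inequality enforced by $q \geq 25$.
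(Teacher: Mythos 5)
Your proof is correct, and it shares the paper's overall skeleton --- verify that the explicit optimal solution $(x^*,X^*)$ of Lemma~\ref{optimal_solution} satisfies every ESC of order $k=\ell(q)+1$, split on whether the induced subgraph has an isolated vertex, and handle the no-isolated-vertex case exactly as in Lemma~\ref{level_k} --- but the construction in the decisive isolated-vertex case is genuinely different. The paper keeps all $k$ vertices and augments the size-at-most-$2$ decomposition by a \emph{single} extra rank-one term $\sigma ss^T$ with $\sigma=\frac{2}{q+\sqrt q}$, where $S$ is one stable set of size at least $3$ containing \emph{all} isolated vertices (padded, when there are fewer than three of them, with two non-adjacent vertices whose existence follows from $\omega(P_q)=\alpha(P_q)<\ell(q)$); this one term simultaneously repairs the negativity of every problematic $\mu_i$. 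You instead peel off a single isolated vertex $v_i$, invoke Lemma~\ref{level_k} on the remaining $\ell(q)$ vertices, and lift that certificate by shifting weight $\eta_T$ from each stable set $T$ to $T\cup\{v_i\}$ according to a max-degree-based fractional scheme; the hypothesis $\alpha(P_q)<\ell(q)$ enters to rule out $I\setminus\{v_i\}$ being a clique, so that $\Delta=\Delta\bigl(\overline{G_{I\setminus\{v_i\}}}\bigr)\geq 1$ --- the same obstruction the paper meets, used at a different spot. Your identities and dominations do check out: $\sum_{T\ni v_j}\eta_T=\beta$ and $\sum_T\eta_T=\alpha$ are immediate, $\eta_{\{v_j\}}\leq\sigma_{\{v_j\}}$ reduces to $\Delta\leq\ell(q)-1\leq\frac{\sqrt q+1}{2}=\alpha/\beta$, and $0\leq\eta_\emptyset\leq\sigma_\emptyset$ follows from the number of edges of the complement being at least $\Delta$ together with $(\ell(q)+1)/\sqrt q\leq 1$ for $q\geq 25$. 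The paper's route is shorter and needs only one additional stable set; yours is more systematic --- essentially a reusable ``append an isolated vertex to a lower-level certificate'' argument, with the advantage that only one isolated vertex ever needs special treatment --- at the price of the extra bookkeeping for $\eta_T\leq\sigma_T$.
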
	
\begin{proof}
    Let $P_q$ be a Paley graph such that $\alpha(P_q) < \ell(q)$ holds and
    let $(x^*, X^*)$ be as in Lemma~\ref{optimal_solution}.
    Let $H$ be a subgraph of $P_q$ on $k=\ell(q)+1$ vertices~$\{v_1, \dots, v_k\}$. Due to Corollary~\ref{cor_level} it is enough to show that the ESC for $H$ is fulfilled. Towards this end we consider two cases.

    Case 1: 
    If all vertices of $H$ have at least one neighbor in $H$, then in each row of~$X^*_H$ there is at least one off-diagonal zero entry according to~\eqref{solution} in Lemma~\ref{optimal_solution}. This implies that in each row of $X^*_H$ there are at most $k-2 = \ell(q)-1$ off-diagonal entries equal to $\frac{2}{q+\sqrt{q}}$.
    With this property in mind, the steps of the proof of  Lemma~\ref{level_k} can be followed in order to finish the proof. In particular, the values of $\lambda$, $\mu_i$, and $\nu_{ij}$ are set in the same fashion, i.e., as they are set in \eqref{coefficient_mu}. The above property implies that all $\mu_i$ are non-negative, and in order to guarantee $\lambda \geq 0$ one has to have $k\frac{1}{\sqrt{q}} \leq 1$, which is fulfilled for all $q \geq 25$.

    Case 2:
    If there is at least one vertex of $H$ that does not have a neighbor in~$H$, then let without loss of generality $v_1, \dots, v_p$ 
    be the vertices of $H$ that do not have any neighbor in~$H$. 
    If $p \geq 3$, then we define $S = \{v_1, \dots, v_p\}$. 
    If $p \leq 2$, 
    then due to Observation~\ref{alpha_equals_clique} we know that $\omega(P_q) = \alpha(P_q) < \ell(q)$, so there are two distinct vertices~$v^*$ and $v^{**}$ among the $\ell(q)$ vertices $\{v_2, \dots, v_k\}$ that are not adjacent and we define $S = \{v_1, \dots, v_p\} \cup \{v^*, v^{**}\}$.
    So independent from the value of $p$, the set~$S$ is a stable set in $H$ on at least three vertices, and all vertices in $H$ that have no neighbor in $H$ are in $S$.
    Let $s \in \{0,1\}^k$ be the incidence vector of the stable set $S$ in $H$. 
    
    In order to show that the ESC for $H$ is satisfied, 
    we set 
	\begin{alignat}{3}
        \sigma &= \frac{2}{q+\sqrt{q}}, \nonumber \\
		\nu_{ij} &= \left\{
\begin{array}{ll}
0 & \text{ if } v_i \in S \text{ and } v_j \in S\\
X^*_{v_i,v_j} & \text{ otherwise}
\end{array}
\right.
 &&\quad 1\leq i < j \leq k, \nonumber \\
		\mu_i &= 
\left\{
\begin{array}{ll}
x^*_{v_i} - \sigma - \sum_{\substack{j=1 \\ j \neq i}}^{k} \nu_{{ij}} & \text{ if } v_i \in S \\
x^*_{v_i} - \sum_{\substack{j=1 \\ j \neq i}}^{k} \nu_{{ij}} & \text{ otherwise}
\end{array}
\right.  
   && \quad 1 \leq i \leq k, \nonumber\\
        \lambda &= 1 - \sum_{i = 1}^{k} \mu_i - \sum_{i = 1}^{k - 1} \sum_{j = i + 1}^{k} \nu_{ij} - \sigma , \nonumber
	\end{alignat}
 and see that
	\begin{align*}
		&\lambda + \sum_{i = 1}^{k} \mu_i + \sum_{i = 1}^{k - 1} \sum_{j = i + 1}^{k} \nu_{ij} + \sigma = 1, \\
		&X^*_H = \lambda 0_k + \sum_{i = 1}^{k} \mu_i E_i + \sum_{i = 1}^{k - 1} \sum_{j = i + 1}^{k} \nu_{ij} E_{ij} + \sigma ss^T, \
	\end{align*}	
    and $\nu_{ij} = 0$ for all $1\leq i < j \leq k$ with $\{v_i,v_j\} \in E_q$ hold.
    Furthermore, clearly $\sigma$ and all $\nu_{ij}$ are non-negative by construction.

    Additionally,  
    \begin{align*}
        \mu_i \geq \frac{1}{\sqrt{q}} - (k-2)\frac{2}{q + \sqrt{q}} 
    \end{align*}
    holds for all values of $1 \leq i \leq k$. 
    In particular, 
    if $v_i \in S$, then by construction ($S$ hast at least three elements) at least two values of $\nu_{ij}$ for $1 \leq j \leq k$, $j \neq i$ are zero, and $\sigma$ has the same value as the remaining values of $\nu_{ij}$. 
    If $v_i \notin S$, then  $v_i$ has at least one neighbor in $H$ and hence at least one of $\nu_{ij}$ for $1 \leq j \leq k$, $j \neq i$ is zero.
    As a consequence, all values of $\mu_i$ are non-negative since  $k = \ell(q) + 1 = \lfloor \frac{\sqrt{q} + 5}{2} \rfloor$.

    Finally, $\lambda \geq 0$ holds by analogous arguments as in the proof of Lemma~\ref{level_k} as soon as $k\frac{1}{\sqrt{q}} \leq 1$ holds, which is the case for all $q \geq 25$. As a consequence, the ESC for $H$ is fulfilled for $p \geq 3$.
\end{proof}

Note that in the proof of Theorem~\ref{thm_alpha_levelPlus1} we show that all ESCs for subgraphs of the order~$\ell(q) + 1$ are satisfied by considering outer products of incidence vectors of stable sets, where all but one stable set have at most $2$ vertices and one stable set has at least $3$ vertices. Thus, it is not surprising that we are able to prove a stronger result than Lemma~\ref{level_k}, where in the proof we only use outer products of incidence vectors of stable sets of at most $2$ vertices. 

As a result of Theorem~\ref{thm_alpha_levelPlus1}, if $\alpha(P_q) < \ell(q)$ holds, then there is not only no improvement of the ESH up until level~$\ell(q)$, but also on the next level~$\ell(q)+1$. 

When we again turn to the computational results in Table~\ref{Table_2}, then for the four 
instances with $\alpha(P_q) < \ell(q)$, i.e., $q \in \{89,101,181,193\}$, also including ESCs for subgraphs of order~$\ell(q) + 2$ fails to yield any improvement, suggesting that the minimum level of the ESH where an improvement may be achieved is $\ell(q) + 3$ for these values of $q$. 
It remains an open question whether indeed, if $\alpha(P_q) < \ell(q)$ holds, there is no improvement up to level $\ell(q) + 2$ for all values of $q$, and it is still open to prove this for $q \in \{89,101,181,193\}$. Most likely, such proofs need to consider outer products of incidence vectors of stable sets on more vertices or more of these outer products. 

\section{The vertex-transitive exact subgraph hierarchy}\label{section_ESH_local}

In this section, we introduce a new hierarchy for the stable set problem for vertex-transitive graphs, which builds upon the ESH and which we will call the vertex-transitive ESH. 

\subsection{Definition}\label{intro_LESH}

Let $G = (V,E)$ with $\vert V \vert = n$ and $V = \{0, \dots, n-1\}$ be a vertex-transitive graph. Since $G$ is vertex-transitive, we can, without loss of generality, choose the vertex $0$ to be in a maximum stable set of $G$. Therefore, any other vertex adjacent to the vertex $0$ cannot be contained in the maximum stable set. Consequently, all other vertices contained in the maximum stable set are elements of $V^L = V \setminus (\{0\} \cup N(0))$. We call the induced subgraph of $G$ induced by the vertices $V^L$ the local graph of $G$ and denote it by $G^L$. 
This terminology is inspired by the nomenclature used by Magsino, Mixon, and Parshall~\cite{Magsino} for Paley graphs, which we will discuss in more detail in Section~\ref{LESH_Paley} that deals with Paley graphs.
From the given argumentation, we note the following.

\begin{obs}\label{stability_local_graph}
Let $G = (V,E)$ with  $\vert V \vert = n$ and $V = \{0, \dots, n-1\}$ be a vertex-transitive graph, and let $G^L$ be the local graph of $G$. Then,
\begin{align*}
    \alpha(G) = 1 + \alpha(G^L).
\end{align*}
\end{obs}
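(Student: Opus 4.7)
The plan is a short two-inequality argument that exploits vertex-transitivity only to justify that vertex $0$ may be assumed to lie in a maximum stable set.

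First I would establish the inequality $\alpha(G) \geq 1 + \alpha(G^L)$. Pick any maximum stable set $T$ of $G^L$. By construction $V^L = V \setminus (\{0\} \cup N(0))$, so no vertex of $T$ is adjacent to $0$ in $G$; moreover, since $G^L$ is the induced subgraph of $G$ on $V^L$, a stable set of $G^L$ is automatically a stable set of $G$. Hence $T \cup \{0\}$ is stable in $G$, yielding $\alpha(G) \geq |T| + 1 = \alpha(G^L) + 1$.

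For the reverse inequality, I would invoke vertex-transitivity. Let $S$ be a maximum stable set of $G$, and pick any $v \in S$. Since $G$ is vertex-transitive, there exists an automorphism $\varphi$ of $G$ with $\varphi(v) = 0$. Then $\varphi(S)$ is a maximum stable set of $G$ containing $0$, so we may assume without loss of generality that $0 \in S$. Every other vertex of $S$ is non-adjacent to $0$, hence lies in $V \setminus (\{0\} \cup N(0)) = V^L$. Consequently $S \setminus \{0\}$ is a stable set of $G^L$ of size $\alpha(G) - 1$, giving $\alpha(G^L) \geq \alpha(G) - 1$.

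Combining both inequalities yields $\alpha(G) = 1 + \alpha(G^L)$. There is no real obstacle here; the only subtle point is to state clearly why vertex-transitivity lets us place vertex $0$ into some optimum stable set, which is what makes the local graph $G^L$ (defined using the specific vertex $0$) capture the full stability number rather than just a vertex-specific quantity.
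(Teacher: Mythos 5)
Your proof is correct and follows essentially the same route as the paper, which justifies the observation by the preceding argument that vertex-transitivity lets one assume vertex $0$ lies in a maximum stable set, forcing the remaining vertices into $V^L$. Your write-up merely makes the two inequalities explicit, which the paper leaves implicit.
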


Hence, calculating the stability number of a vertex-transitive graph $G$ reduces to computing the stability number of $G^L$, and we obtain a maximum stable set of $G$ by taking a maximum stable set in $G^L$ and adding the vertex~$0$.

Since every vertex-transitive graph is $r$-regular for some $r \in \mathbb{N}_0$, an immediate advantage of Observation~\ref{stability_local_graph} is that the order of the graph that is considered reduces from $n$ for $G$ to $n - r - 1$ for $G^L$. However, the computation of the stability number of $G^L$, and thus $G$, remains NP-hard. 

Therefore, we introduce the vertex-transitive ESH to explore upper bounds on $\alpha(G)$ using the following principle. Following the theory of the ESH from Section~\ref{method_ESH}, we know that when the constraint $X \in \STAB^{2}(G^L)$ is added to the SDP $\eqref{theta_1}$ for $G^L$ to compute the Lovász theta function $\vartheta(G^L)$, the optimal objective function value of the resulting SDP is $\alpha(G^L)$. Due to Observation~\ref{stability_local_graph}, this allows us to deduce the stability number of $G$. Nevertheless, the obtained SDP is still too large to solve, so we include the constraint $X \in \STAB^{2}(G^L)$ only partially for smaller subgraphs. In particular, for~$k \in \{0, \ldots, n - r - 1\}$ we define the $k$-th level of the vertex-transitive ESH (VTESH) as
\begin{align}\label{k_th_level_LESH}
	z^\prime_k(G) &= 1 + z_k(G^L).
\end{align}
Furthermore, we define $z^\prime_k(G) = z^\prime_{n-r-1}(G)$ for $k > n - r - 1$, analogously to the definition of the ESH.

From Observation~\ref{stability_local_graph} we obtain the relationship 
\begin{align}
\label{ineq:relationship_zPrime}
\alpha(G) = 1 + \alpha(G^L) = z^\prime_{n - r - 1}(G) \leq \cdots \leq z^\prime_2(G) \leq z^\prime_1(G) = z^\prime_0(G) = 1 + \vartheta(G^L),
\end{align}
so the higher the considered level~$k$ of the VTESH of a vertex-transitive graph is, 
the better the bounds on the stability number are, just as it is the case for the ESH.

\subsection{Relationship between hierarchies}\label{relationship_ESH_LESH}

As the next step, we investigate the relationship between the bounds on the $k-$th level of the ESH and the VTESH for vertex-transitive graphs. We begin by establishing the following statement.

\begin{lem}\label{ESH_subgraphs}
Let $G = (V, E)$ be a graph on $n$ vertices, let $I\subseteq V$, $I \neq \emptyset$, and let $(x, X)$ be a feasible solution for~\eqref{theta_1}. Let $i \in I$ such that 
\begin{enumerate}[(a)]
\item\label{first} $X_{ij} = 0$ for all $j \in I$, or
\item\label{second} $X_{ii} = 1$ and $X_{ij} = X_{jj}$ for all $j \in I\setminus \{i\}$
\end{enumerate}
holds. Then
\begin{align*}
X_I \in \STAB^2(G_I) \quad \Leftrightarrow \quad X_{I \setminus \{i\}} \in \STAB^2(G_{I \setminus \{i\}}).
\end{align*}
\end{lem}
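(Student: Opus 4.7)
The plan is to unpack the definition of $\STAB^2$ as a convex hull of outer products of incidence vectors of stable sets and, in each direction of the equivalence, explicitly construct a decomposition of one matrix from a decomposition of the other by either removing or appending the $i$-th coordinate.

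For the implication ``$X_I \in \STAB^2(G_I) \Rightarrow X_{I \setminus \{i\}} \in \STAB^2(G_{I \setminus \{i\}})$'', I would write $X_I = \sum_s \lambda_s s s^T$ with $s \in \{0,1\}^I$ ranging over incidence vectors of stable sets of $G_I$, $\lambda_s \geq 0$, $\sum_s \lambda_s = 1$. In case~\eqref{first}, since $X_{ii} = 0 = \sum_s \lambda_s s_i^2$ and $s_i \in \{0,1\}$, every $s$ with $\lambda_s > 0$ satisfies $s_i = 0$; hence the restriction $s' := s_{I \setminus \{i\}}$ is a stable set of $G_{I \setminus \{i\}}$ and $X_{I \setminus \{i\}} = \sum_s \lambda_s s' (s')^T$. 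In case~\eqref{second}, $X_{ii} = 1 = \sum_s \lambda_s s_i^2$ with $\sum_s \lambda_s = 1$ forces $s_i = 1$ for every $s$ with $\lambda_s > 0$; restricting such $s$ to $I \setminus \{i\}$ again yields a stable set of $G_{I \setminus \{i\}}$, and the corresponding convex combination equals $X_{I \setminus \{i\}}$.

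For the reverse implication I would start from $X_{I \setminus \{i\}} = \sum_s \lambda_s s s^T$ and extend each $s$ to a vector $\tilde{s} \in \{0,1\}^I$. In case~\eqref{first} I set $\tilde{s}_i = 0$; then $\tilde{s}$ is automatically a stable set in $G_I$, and the extra row/column produced by $\sum_s \lambda_s \tilde{s}\tilde{s}^T$ consists of zeros, which matches $X_I$ by hypothesis. In case~\eqref{second} I set $\tilde{s}_i = 1$; here the key point is that for every neighbor $j \in N(i) \cap I$ feasibility of $(x,X)$ gives $X_{ij} = 0$, and together with $X_{ij} = X_{jj}$ this forces $X_{jj} = 0$, so $\sum_s \lambda_s s_j^2 = 0$ and therefore $s_j = 0$ for every $s$ with $\lambda_s > 0$. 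Consequently $\tilde{s}$ is a stable set in $G_I$, and a direct check confirms that $\sum_s \lambda_s \tilde{s}\tilde{s}^T$ has diagonal entry $1 = X_{ii}$ at position $i$, off-diagonal entries $\sum_s \lambda_s s_j = X_{jj} = X_{ij}$ in the $i$-th row/column, and agrees with $X_{I \setminus \{i\}}$ on the remaining block.

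The main obstacle is case~\eqref{second} of the reverse implication: one must verify that forcing $\tilde{s}_i = 1$ does not violate stability of $\tilde{s}$ in $G_I$. This is precisely where feasibility of $(x,X)$ (which gives $X_{ij} = 0$ on edges) combines with the hypothesis $X_{ij} = X_{jj}$ to propagate zeros from the off-diagonal to the diagonal of $X_{I \setminus \{i\}}$, killing exactly the entries corresponding to neighbors of $i$. Once this propagation is in place the rest of the argument is a bookkeeping check that the entries of the constructed convex combination match those of $X_I$.
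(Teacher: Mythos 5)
Your proposal is correct and follows essentially the same route as the paper: restrict the convex combination for the forward direction, and for the reverse direction extend each incidence vector by $0$ (case (a)) or by $1$ (case (b)), using feasibility ($X_{ij}=0$ on edges) together with $X_{ij}=X_{jj}$ to force $s_j=0$ at neighbors of $i$ so that the extended vectors remain stable. The only cosmetic difference is that the paper dispatches the forward direction by citing the observation that ESCs are inherited by subgraphs, whereas you verify it directly.
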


\begin{proof}
We proceed by proving both directions of the equivalence separately.

"\(\Rightarrow\)": Suppose \( X_I \in \mathrm{STAB}^2(G_I) \). Gaar~\cite[Observation 2.1.5]{Gaa:18} showed that if an ESC is fulfilled for a subgraph $H$, it is also satisfied for each subgraph of $H$. Thus, $X_{I \setminus \{i\}} \in \STAB^2(G_{I \setminus \{i\}})$ holds.

"\(\Leftarrow\)": Suppose $X_{I \setminus \{i\}} \in \STAB^2(G_{I \setminus \{i\}})$. Without loss of generality, we assume that $i$ is the last entry of $I$.
Let $k=|I|$ and
let $\mathcal{S}(G_{I \setminus \{i\}})$ be the set of all incidence vectors of stable sets in $G_{I \setminus \{i\}}$. Then, there exist some $t \geq 0$, some $s_\ell \in \mathcal{S}(G_{I \setminus \{i\}})$ and some $\lambda_{\ell} > 0$ for all $1 \leq \ell \leq t$, such that 
\begin{align}\label{sum_of_coef}
	\sum_{\ell = 1}^{t} \lambda_{\ell} = 1
\end{align}
and such that the matrix $X_{I \setminus \{i\}}$ can be expressed as 
\begin{align}\label{matrix_X_I_without_i}
	X_{I \setminus \{i\}} = \sum_{\ell = 1}^{t} \lambda_\ell s_{\ell} s^T_{\ell}.
\end{align}
We now show that $X_{I} \in \mathrm{STAB}^2(G_{I})$ by considering two cases, depending on whether~\eqref{first} or~\eqref{second} holds for $i$.

Case 1: Assume~\eqref{first} holds, so $X_{i j} = 0$ for all $j \in I$. In this case, every incidence vector of a stable set $s_{\ell} \in \mathcal{S}(G_{I \setminus \{i\}})$ can be extended to an incidence vector of a stable set $\tilde{s}_{\ell} \in \mathcal{S}(G_{I})$ by setting $\tilde{s}_{\ell} = \begin{pmatrix} s_{\ell} & 0 \end{pmatrix}^T$. 

We set $\tilde\lambda_{\ell} = \lambda_{\ell}$ for all $1 \leq \ell \leq t$. Then, $\tilde\lambda_{\ell} \geq 0$ for all $1 \leq \ell \leq t$, and 
$\sum_{\ell = 1}^{t} \lambda^\prime_{\ell} = 1$
due to~\eqref{sum_of_coef}.
Furthermore, given that because of~\eqref{first}
\begin{align*}
X_{I} = \begin{pmatrix} X_{I \setminus \{i\}} & 0_{k-1} \\ 0^T_{k-1} & 0 \end{pmatrix}
\end{align*}
and 
\begin{align*}
    \tilde{s}_\ell \tilde{s}^T_\ell = \begin{pmatrix} s_\ell s^T_\ell & 0_{k-1} \\ 0^T_{k-1} & 0 \end{pmatrix},
\end{align*}
we obtain that we can write $X_I$ as
\begin{align*}
	X_{I} = \sum_{\ell = 1}^{t} \tilde{\lambda}_{\ell} \tilde{s}_{\ell} \tilde{s}^T_{\ell}. 
\end{align*}
Hence, $X_I \in \STAB^2(G_I)$ holds.

Case 2: Assume~\eqref{second} holds, so $X_{i i} = 1$ and $X_{i j} = X_{jj}$ for all $j \in I \setminus \{i\}$. 

First, we argue that $s^\prime_{\ell} = \begin{pmatrix} s_{\ell} & 1 \end{pmatrix}^T$ is an incidence vector of a stable set in $G_I$ for all $1 \leq \ell \leq t$.
Indeed, assume that this is not the case. Then, there exists some~$\ell^\prime$ such that $s^\prime_{\ell^\prime} = \begin{pmatrix} s_{\ell^\prime} & 1 \end{pmatrix}^T$ is not an incidence vector of a stable set in $G_I$. Hence, there exists some $j^\prime \in I \setminus \{i\}$ such that~$[s_{\ell^\prime}]_{j^\prime} = 1$ and $\{i, j^\prime\} \in E$. Now, since~$\{i, j^\prime\} \in E$, we have that $X_{i{j^\prime}} = 0$, and therefore~$X_{j^\prime j^\prime} = 0$ because of~\eqref{second}. Due to the representation~\eqref{matrix_X_I_without_i} and because $\lambda_\ell > 0$ and $s_{\ell} \in \{0,1\}^{|I|-1}$ for all $1 \leq \ell \leq t$, this implies $\lambda_{\ell}[s_{\ell}]_{j^\prime} = 0$ for all $1 \leq \ell \leq t$, and hence $[s_{\ell^\prime}]_{j^\prime} = 0$, a contradiction. Therefore, $s^\prime_{\ell}$ is indeed an incidence vector of a stable set in $G_I$ for all $1 \leq \ell \leq t$.

Now, we set $\lambda^\prime_{\ell} = \lambda_{\ell}$ for all $1 \leq \ell \leq t$, and obtain $\lambda^\prime_{\ell} \geq 0$ for all $1 \leq \ell \leq t$ 
and $\sum_{\ell = 1}^{t} \lambda^\prime_{\ell} = 1$
due to~\eqref{sum_of_coef}. 
Furthermore, 
\begin{align*}
X_{I} = \begin{pmatrix} X_{I \setminus \{i\}} & x_{I \setminus \{i\}} \\ x_{I \setminus \{i\}}^T & 1 \end{pmatrix}
\end{align*}
and
\begin{align*}
    s^\prime_{\ell} s ^{\prime T}_{\ell} = \begin{pmatrix} s_{\ell} s^T_{\ell}& s_{\ell} \\ s^T_{\ell} & 1 \end{pmatrix}.
\end{align*}
Thus, we can express the matrix $X_I$ as
\begin{align*}
	X_{I} = \sum_{\ell = 1}^{t} \lambda^\prime_{\ell} s^\prime_{\ell} s^{\prime T}_{\ell}.
\end{align*}
Hence, also in this case, we obtain that $X_I \in \STAB^2(G_I)$ holds, which finishes the proof.
\end{proof}

With the statement of Lemma~\ref{ESH_subgraphs} we are now ready to show that the bounds obtained by the VTESH are at least as good as the ones obtained by the ESH. 

\begin{thm}\label{LESH_at_least_as_good_as_ESH}
Let $G = (V, E)$ be a vertex-transitive graph. Then, for all $k \in \mathbb{N}_0$, 
\begin{align*}
z^\prime_k(G) \leq z_k(G).
\end{align*}
\end{thm}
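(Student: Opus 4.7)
The plan is to lift an optimal solution of the SDP defining $z_k(G^L)$ to a feasible solution of the SDP defining $z_k(G)$ whose objective value is exactly $1+z_k(G^L)=z'_k(G)$. Concretely, fix $k$ and let $(x^L,X^L)$ attain $z_k(G^L)$. I would define $(x,X)\in \mathbb{R}^n\times \mathbb{R}^{n\times n}$ by setting $x_0=1$ and $X_{00}=1$; $x_v=0$ and $X_{vj}=X_{jv}=0$ for every $v\in N(0)$ and every $j\in V$; $x_v=x^L_v$ and $X_{vw}=X^L_{vw}$ for all $v,w\in V^L$; and $X_{0v}=X_{v0}=x_v$ for $v\in V\setminus\{0\}$. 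Because the contributions of $\{0\}\cup N(0)$ to $\mathds{1}_n^T x$ sum to $1$, the objective value equals $1+\mathds{1}_{n-r-1}^T x^L=z'_k(G)$.

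Next I would verify all constraints of the SDP defining $z_k(G)$ other than the ESCs. The diagonal condition $\diag(X)=x$ and the edge equalities $X_{ij}=0$ for $\{i,j\}\in E$ are immediate from the construction: edges incident to $N(0)$ are killed by the zero entries, and edges inside $V^L$ inherit the zeros of $X^L$. For the PSD constraint on $\begin{pmatrix} X & x \\ x^T & 1 \end{pmatrix}$, I would permute rows and columns into the order $0$, $V^L$, $N(0)$, and then the auxiliary last index, observe that the $N(0)$-block becomes an all-zero principal block that can be discarded, and verify that the remaining nontrivial principal submatrix
\[
\tilde M=\begin{pmatrix} 1 & (x^L)^T & 1 \\ x^L & X^L & x^L \\ 1 & (x^L)^T & 1 \end{pmatrix}
\]
satisfies $(a,b^T,c)\,\tilde M\,(a,b^T,c)^T=(a+c)^2+2(a+c)(x^L)^T b+b^T X^L b=(b^T,a+c)\,M^L\,(b^T,a+c)^T\geq 0$, where $M^L=\begin{pmatrix} X^L & x^L \\ (x^L)^T & 1\end{pmatrix}\succeq 0$ by feasibility of $(x^L,X^L)$.

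The main obstacle is to show that the ESC holds for every $I\subseteq V$ with $|I|=k$, and the strategy is to reduce $I$ to $I\cap V^L$ by iteratively applying Lemma~\ref{ESH_subgraphs} in the direction "smaller set satisfied implies larger set satisfied". Starting from $I_0=I\cap V^L$, we have $X_{I_0}=X^L_{I_0}\in\STAB^2(G^L_{I_0})=\STAB^2(G_{I_0})$ by feasibility of $(x^L,X^L)$ at level $k$, using the $(\Rightarrow)$ direction of Lemma~\ref{ESH_subgraphs} to pass from a level-$k$ superset inside $V^L$ in case $|I_0|<k$. If $0\in I$, form $I_1=I_0\cup\{0\}$: since $X_{00}=1$ and $X_{0v}=x_v=X_{vv}$ for every $v\in I_0\subseteq V^L$, condition~(b) of Lemma~\ref{ESH_subgraphs} applies to $i=0$ in $I_1$, yielding $X_{I_1}\in\STAB^2(G_{I_1})$. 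Finally, the vertices in $I\cap N(0)$ are added back one at a time; at each step the newly added vertex $w$ satisfies $X_{wj}=0$ for all $j$ in the current set by construction, so condition~(a) of Lemma~\ref{ESH_subgraphs} applies and the ESC is preserved. After all vertices are added back, $X_I\in\STAB^2(G_I)$ as required.

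Combining everything, $(x,X)$ is feasible for the SDP defining $z_k(G)$, hence $z_k(G)\geq \mathds{1}_n^T x = z'_k(G)$. The residual case $k>n-r-1$ (where the construction above can encounter $|V^L|<k$) follows immediately, since in that case $z'_k(G)=\alpha(G)\leq z_k(G)$ by~\eqref{ineq:relationship_z}.
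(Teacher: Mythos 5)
Your proposal is correct and follows essentially the same route as the paper's proof: the identical lifting of an optimal solution for $z_k(G^L)$ to a feasible point for $z_k(G)$ (vertex $0$ set to $1$, neighbors of $0$ zeroed out), verification of the ESCs by starting from $I\cap V^L$ and adding back $0$ and the vertices of $N(0)$ one at a time via conditions (b) and (a) of Lemma~\ref{ESH_subgraphs}, and the same treatment of the degenerate case $k>n-r-1$. The only cosmetic difference is that you certify positive semidefiniteness by a direct quadratic-form computation on the permuted matrix, whereas the paper uses a Schur-complement argument; both are valid and equivalent in substance.
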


\begin{proof}
Let $n$ be the number of vertices of $G$. Given that $G$ is vertex-transitive, we know that it is $r$-regular for some $r \in \mathbb{N}_0$.
If $k > n - r - 1$, then
\begin{align*}
z^\prime_k(G) = z^\prime_{n-r-1}(G) = \alpha(G) \leq z_{k}(G)
\end{align*}
holds according to the definition of the VTESH,~\eqref{ineq:relationship_zPrime} and~\eqref{ineq:relationship_z}.

Thus, what is left to show is that $z^\prime_k(G) \leq z_k(G)$ holds for all $k \in \{0, \ldots, n - r - 1\}$. From the definition of $z^\prime_k(G)$ given in~\eqref{k_th_level_LESH}, we have $z^\prime_k(G) = 1 + z_k(G^L)$, where $G^L = (V^L, E^L)$ is the local graph of $G$. Thus, we need to show that $1 + z_k(G^L) \leq z_k(G)$.

Let $(x^*,X^*)$ be an optimal solution of the SDP to compute $z_k(G^L)$, i.e., of~\eqref{theta_1} for the local graph $G^L$ with the ESC for each subgraph of order~$k$ added. 

Assume the vertex set of $G$ is $V = \{0, \ldots, n-1\}$, and the vertex set of $G^L$ is $V^L = \{0^\prime, \ldots, n-r-2^\prime\}$. Now let $\pi \colon V^L \to V$ such that $\pi(i^\prime) \in \{0, \ldots, n-1\}$ is the vertex in $G$ that corresponds to the vertex $i^\prime$ in $G^L$. 

We define $y \in \mathbb{R}^n$ as well as $Y \in \mathbb{R}^{n \times n}$ as follows. First, we set
\begin{alignat*}{3}
y_{\pi(i^\prime)} &=x^*_{i^\prime} && \quad \forall i^\prime \in V^L \\
Y_{\pi(i^\prime)\pi(j^\prime)} & = X^*_{i^\prime j^\prime} && \quad \forall i^\prime, j^\prime \in V^L.
\end{alignat*}
Furthermore, since the vertex $0$ is removed from $G$ to obtain $G^L$, we have that $0 \notin \{\pi(0^\prime), \ldots, \pi(n-r-2^\prime)\}$. Hence, we set
\begin{align*}
y_{0} = Y_{00} &= 1 \\
Y_{0j} = Y_{j0} &= x^*_j \quad \forall j \in \{\pi(0^\prime), \ldots,  \pi(n-r-2^\prime)\}.
\end{align*}
Finally, for all other indices, we define
\begin{alignat*}{3}
y_{i} &= 0 && \quad \forall i \in V \setminus \{0, \pi(0^\prime), \ldots,  \pi(n-r-2^\prime)\} \\
Y_{ij} & = 0 && \quad ~i \in V \setminus \{0, \pi(0^\prime), \ldots,  \pi(n-r-2^\prime)\} \textrm{ or } \\  
&&& \qquad \qquad  j \in V \setminus \{0, \pi(0^\prime), \ldots,  \pi(n-r-2^\prime)\}.
\end{alignat*}

This constructs an embedding of the solution $(x^*, X^*)$ into $(y, Y)$, which represents taking the solution $(x^*, X^*)$ for $G^L$ and the vertex $0$ as solution for the graph $G$. 
In particular, $y$ and $Y$ can be seen as a permutation of the 
vector $\tilde{y} = \begin{pmatrix} 1 & x^{*T} & 0^T_{r} \end{pmatrix}^T $
and the matrix
$$\tilde{Y} = \begin{pmatrix} 1 & x^{*T} & 0^T_{r} \\ x^* & X^* & 0_{(n-r-1) \times r} \\ 
0_{r} & 0_{r\times (n-r-1)} & 0_{r\times r} \end{pmatrix},$$ respectively.

Next, we show that $(y, Y)$ is a feasible solution for the SDP to compute~$z_k(G)$. From the construction of $(y, Y)$ and from 
$\diag(X^*) = x^*$ and $X^*_{ij} = 0 ~\forall \{i,j\}\in E$
it immediately follows that
$\diag(Y) = y$ and $Y_{ij} = 0$ for all $\{i,j\} \in E$.

In order to show that $\begin{pmatrix} Y & y \\ y^T & 1 \end{pmatrix} \succeq 0$ holds, it is enough to show that  $\begin{pmatrix} \tilde{Y} & \tilde{y} \\ \tilde{y}^T & 1 \end{pmatrix} \succeq 0$. Due to Schur's complement, this is the case if and only if 
$$ \tilde{Y} - \tilde{y}\tilde{y}^T
= \begin{pmatrix} 0 & 0^T_{n-r-1} & 0^T_{r} \\ 0_{n-r-1} & X^* - x^*x^{*T} & 0_{(n-r-1) \times r} \\ 
0_{r} & 0_{r\times (n-r-1)} & 0_{r\times r} \end{pmatrix} \succeq 0,
$$
which clearly holds because 
$\begin{pmatrix} X^* & x^* \\ x^{*T} & 1 \end{pmatrix} \succeq 0$ and hence with Schur's complement $X^* - x^*x^{*T} \succeq 0$.

What is left to show is that $Y_I \in \STAB^{2}(G_I)$ for all $I \subseteq V$ with $\vert I \vert = k$. First, we note that 
\begin{align}\label{property_subgraphs}
X^*_{I^L} \in \STAB^{2}(G^L_{I^L}) ~ \forall I^L \subseteq V^L: \vert I^L \vert = k.
\end{align}

Now let $I \subseteq V$ with $\vert I \vert = k$ be arbitrary but fixed, and let $\tilde{I} \subseteq I$ be the subset of vertices of $I$ that correspond to vertices of $G^L$, i.e., $\tilde{I} = I \cap \{\pi(0^\prime), \ldots, \pi(n-r-2^\prime)\}$,
and let $\tilde{k} = \vert \tilde{I} \vert$. Then, we can distinguish two cases.

Case 1: If $\tilde{k} = k$ (so $\tilde{I} = I$), then from the construction of $(y, Y)$ and~\eqref{property_subgraphs}, we obtain that $Y_I \in \STAB^{2}(G_I)$.

Case 2: If $\tilde{k} < k$,  
then we can apply the reasoning from the first case 
and obtain that $Y_{\tilde{I}} \in \STAB^{2}(G_{\tilde{I}})$. 

Now let $\{v_1, \dots, v_{k-\tilde k}\} = I \setminus \tilde{I}$.
From the construction of $(y, Y)$, we have that~$Y_{v_1j} = 0$ for all $j \in I$ if $v_1 \neq 0$, or $Y_{v_1v_1} = 1$ and $Y_{v_1j} = y_j$ for all $j \in I \setminus \{v_1\}$ if $v_1 = 0$. Hence, by applying Lemma~\ref{ESH_subgraphs}, we obtain that $Y_{\tilde{I} \cup \{v_1\}} \in \STAB^{2}(G_{\tilde{I} \cup \{v_1\} })$. 
We apply this argumentation iteratively for $v_2$, \dots, $v_{k-\tilde k}$ to finally conclude for $I = \tilde{I} \cup \{v_1, \dots, v_{k-\tilde k}\}$ that $Y_I \in \STAB^{2}(G_I)$ holds.

In both cases, we see that $Y_I$ is contained in $\STAB^{2}(G_I)$. 
Thus, $(y, Y)$ is feasible for the SDP~\eqref{theta_1} for the graph $G$ with the ESC for each subgraph of order~$k$ added, i.e., for the SDP to compute $z_k(G)$, and its objective function value equals 
\begin{align*}
\mathds{1}_n^T y = 1 + \mathds{1}_{n-r-1}^T x^* = 1 + z_k(G^L),
\end{align*}
which implies that $1 + z_k(G^L) \leq z_k(G)$, and thus $z^\prime_k(G) \leq z_k(G)$.

\end{proof}

\subsection{Behavior for Paley graphs}\label{LESH_Paley}

In Section~\ref{section_ESH_Paley}, we observed that for every $P_q$ there exists a level~$\ell(q)$ for which adding ESCs on subgraphs of orders $\{2, \ldots, \ell(q)\}$ does not yield an improvement over the Lovász theta function in terms of bounding the stability number $\alpha(P_q)$. We now investigate whether this behavior persists with the VTESH or if the VTESH provides stronger bounds for these graphs.

To this end, we first revisit the approach and results by Magsino, Mixon, and Parshall in~\cite{Magsino}. 
In that work, the authors 
obtain an upper bound on the clique number of the Paley graph $P_q$ if $q$ is a prime by exploiting the fact that $P_q$ is vertex-transitive, and hence they can assume without loss of generality that the vertex $0$ is part of a maximum clique. Therefore, the remaining vertices of such a maximum clique are in the set of neighbors of the vertex $0$. They define the graph $L_q$ as the induced subgraph of $P_q$ induced by the set of neighbors of the vertex $0$ and derive that~$\omega(P_q) = 1 + \omega(L_q)$ holds. As a consequence, the authors of~\cite{Magsino} obtain 
$\omega(P_q) \leq 1 + \vartheta(\overline{L_q})$.
By using Observation~\ref{alpha_equals_clique} that holds due to the self-complementarity of Paley graphs, we can deduce that
\begin{align*}
	b_{M}(P_q) = 1 + \vartheta(\overline{L_q})
\end{align*}
is an upper bound on $\alpha(P_q)$ whenever $q$ is a prime. Next, we investigate $L_q$ in more detail.

\begin{lem}\label{lem:GraphsIsomorphic}
	Let $P_q$ be a Paley graph such that $q$ is a prime. Then $P_q^L$ and~$\overline{L_q}$ are isomorphic.
\end{lem}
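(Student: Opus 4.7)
The plan is to exhibit an explicit graph isomorphism between $\overline{L_q}$ and $P_q^L$ that comes from the multiplicative structure of $\mathbb{F}_q$. First I would identify the vertex sets: the neighbors of the vertex $0$ in $P_q$ are exactly the set $S$ of nonzero squares in $\mathbb{F}_q$, so $L_q$ (and thus $\overline{L_q}$) has vertex set $S$, while the local graph $P_q^L$ is induced by $V_q \setminus (\{0\} \cup N(0))$, which is precisely the set $N$ of nonzero non-squares in $\mathbb{F}_q$. Both sets have size $(q-1)/2$.

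Next, fix any non-square $c \in \mathbb{F}_q^*$ (which exists since exactly half of $\mathbb{F}_q^*$ consists of non-squares) and define $\phi \colon S \to N$ by $\phi(x) = cx$. Because multiplication by $c$ is a bijection on $\mathbb{F}_q^*$ that swaps squares and non-squares, $\phi$ is a bijection between the vertex sets of $\overline{L_q}$ and $P_q^L$. It then remains to verify adjacency preservation: for distinct $x, y \in S$, the pair $\{x,y\}$ is an edge of $\overline{L_q}$ exactly when $x - y$ is a non-square (i.e. $\{x,y\}$ is not an edge of $L_q$), while $\{\phi(x), \phi(y)\} = \{cx, cy\}$ is an edge of $P_q^L$ exactly when $cx - cy = c(x-y)$ is a square. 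Since $c$ is a non-square, $c(x-y)$ is a square if and only if $x-y$ is a non-square, and the two adjacency conditions coincide.

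I expect no significant obstacle here; the proof is essentially a bookkeeping argument using the quadratic character on $\mathbb{F}_q^*$. The only subtlety worth flagging is making sure the map acts on the correct vertex sets (squares versus non-squares) and that $c$ is chosen to be a non-square rather than a square, since multiplication by a square would just induce an automorphism of $L_q$ itself and would not interchange squares with non-squares as required.
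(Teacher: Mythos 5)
Your proof is correct and follows essentially the same route as the paper: both identify the vertex sets of $\overline{L_q}$ and $P_q^L$ with the nonzero squares and non-squares respectively, and both use multiplication by a fixed non-square as the explicit isomorphism, with the multiplicativity of the quadratic character doing the adjacency check. The paper phrases the map as (the restriction of) the self-complementarity isomorphism $i \mapsto \beta i$ of $P_q$, but this is the same map you construct directly.
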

\begin{proof}
Let $Q_q$ and $NQ_q$ denote the set of nonzero quadratic residues and non-residues modulo~$q$, respectively. According to the definition of Paley graphs~$P_q$ outlined in Section~\ref{Paley_graphs}, the set of vertices of $P_q$ is $V_q = \{0\} \cup Q_q \cup NQ_q$, the set of edges of $P_q$ is $E_q = \{\{i,j\} \in V_q: i - j \in Q_q\}$ and the set of neighbors of the vertex~$0$ is $N(0) = Q_q$.

Thus, due to the definition of the local graph from Section~\ref{intro_LESH}, we have
\begin{align*}
V(P_q^L) = NQ_q \text{ and } E(P_q^L) = \left\{\{i, j\} \in NQ_q : i - j \in Q_q \right\}.
\end{align*}
Similarly, since $V(L_q) = Q_q$ and $E(L_q) = \left\{\{i, j\} \in Q_q : i - j \in Q_q \right\}$, it follows that
\begin{align*}
V(\overline{L_q}) = Q_q \text{ and } E(\overline{L_q}) = \left\{\{i, j\} \in Q_q : i - j \in NQ_q \right\}.
\end{align*}

Furthermore, $|Q_q| = |NQ_q| = \frac{q-1}{2}$ because $P_q$ is a strongly regular graph with parameters $(q, \frac{q-1}{2}, \frac{q-5}{4}, \frac{q-1}{4})$ as shown in~\cite{Godsil}.
Thus, the graphs $P_q^L$ and~$\overline{L_q}$ have the same number of vertices.
According to~\cite{Godsil}, for every $\beta \in NQ_q$ the map $\phi : V_q \to V_q$ with $\phi(i) = \beta i$ is a graph isomorphism between~$P_q$ and its complement, that maps $Q_q$ to $NQ_q$. As $P_q$ is self-complementary, it follows that $\overline{L_q}$ and $P_q^L$ are isomorphic.
\end{proof}

As a result of Lemma~\ref{lem:GraphsIsomorphic}, the bound $b_{M}(P_q)$ can be restated using $P^L_q$ instead of $\overline{L_q}$ for $q$ being a prime. Furthermore, all the cases where $q$ is not a prime can be handled with Observation~\ref{stability_local_graph} directly, and so we obtain 
\begin{align*}
	b_{M}(P_q) = 1 + \vartheta(P^L_q).
\end{align*}
as an upper bound on $\alpha(P_q)$.

In~\cite{Magsino}, the authors strengthen $b_{M}(P_q)$ as an upper bound on $\alpha(P_q)$ by considering the Schrijver refinement~\cite{Schrijver} of the Lovász theta function. In particular, for a graph~$G$ on $n$ vertices, Schrijver adds the non-negativity constraint into~\eqref{theta_2} and defines
\begin{align*}
	\vartheta^*(G) ~&=~ \max \{\langle \mathds{1}_{n \times n}, X \rangle \colon X \succeq 0, ~X \geq 0, ~ \tr(X) = 1, ~ X_{ij} = 0 ~~\forall \{i,j\} \in E\}.
\end{align*}
Galli and Letchford~\cite{GALLI2017159} argued that incorporating this non-negativity constraint $X \geq 0$ into~\eqref{theta_1} results in the same bound $\vartheta^*(G)$. Since $\vartheta^*(G) \leq \vartheta(G)$, using $\vartheta^*(G)$ provides a potentially better upper bound on $\alpha(P_q)$ than $b_{M}(P_q)$, namely
\begin{align*}
	b_{M^*}(P_q) = 1 + \vartheta^*(P^L_q).
\end{align*}
Moreover, Schrijver~\cite{Schrijver} demonstrated that the SDP formulations of both~$\vartheta(G)$ and~$\vartheta^*(G)$ can be reduced to linear programs for circulant graphs. Since $P_q$, and consequently $P^L_q$, is circulant when~$q$ is a prime, the values of both $b_M(P_q)$ and~$b_{M^*}(P_q)$ can be computed by solving a linear program in this case.

In order to assess the behavior of the VTESH for Paley graphs, we now explore the relationship between the bounds on the $k$-th level of the VTESH for Paley graphs $P_q$ and the bounds $b_{M}(P_q)$ and $b_{M^*}(P_q)$. First, we note the following.

\begin{obs}\label{ESH_level_0_1}
Let $P_q$ be a Paley graph. Then,
\begin{align*}
b_{M}(P_q) = 1 + \vartheta(P^L_q)  = z^\prime_1(P_q) = z^\prime_0(P_q).
\end{align*}
\end{obs}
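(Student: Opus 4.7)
The plan is that this observation is a chain of equalities that all reduce to unwinding definitions together with Lemma~\ref{lem:GraphsIsomorphic}, so a short proof suffices. I will verify each of the three equalities in turn.

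For the first equality $b_M(P_q) = 1 + \vartheta(P_q^L)$, the argument has essentially been given in the discussion directly preceding the statement: when $q$ is prime, Magsino, Mixon and Parshall's bound is $b_M(P_q) = 1 + \vartheta(\overline{L_q})$, and by Lemma~\ref{lem:GraphsIsomorphic} the graphs $\overline{L_q}$ and $P_q^L$ are isomorphic, hence share the same Lovász theta value. When $q$ is not prime, the extension of $b_M(P_q)$ to $1 + \vartheta(P_q^L)$ was made directly via Observation~\ref{stability_local_graph}. Either way, $b_M(P_q) = 1 + \vartheta(P_q^L)$.

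For the remaining two equalities, I would appeal to the definition of the VTESH in~\eqref{k_th_level_LESH}, namely $z^\prime_k(G) = 1 + z_k(G^L)$, applied with $k = 0$ and $k = 1$ to $G = P_q$. Combined with the well-known facts $z_0(H) = z_1(H) = \vartheta(H)$ from the ESH chain~\eqref{ineq:relationship_z} applied to $H = P_q^L$, this gives $z^\prime_0(P_q) = 1 + \vartheta(P_q^L) = z^\prime_1(P_q)$, closing the chain.

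There is no substantive obstacle; the observation is simply the bridge that identifies the bound proposed in~\cite{Magsino} with the zero-th and first levels of the new hierarchy. The only point that requires care is invoking Lemma~\ref{lem:GraphsIsomorphic} to convert $\vartheta(\overline{L_q})$ into $\vartheta(P_q^L)$, since the original formulation of $b_M$ was phrased in terms of~$\overline{L_q}$.
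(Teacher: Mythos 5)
Your argument is correct and matches the paper's own (implicit) justification: the paper states this as an observation following directly from the redefinition $b_M(P_q) = 1 + \vartheta(P_q^L)$ obtained via Lemma~\ref{lem:GraphsIsomorphic} (prime case) and Observation~\ref{stability_local_graph} (non-prime case), together with the definition $z^\prime_k(G) = 1 + z_k(G^L)$ and the identities $z_0 = z_1 = \vartheta$ from~\eqref{ineq:relationship_z}. Nothing is missing.
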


Therefore, the bound $b_M(P_q)$ coincides with $z^\prime_0(P_q)$ and $z^\prime_1(P_q)$. To connect the VTESH with the bound $b_{M^*}(P_q)$, we first establish a relationship between the ESH and Schrijver's refinement of the Lovász theta function.

\begin{prop}\label{observation_Schrijver_leve_2}
For any graph $G$,
\begin{align*}
    z_2(G) \leq \vartheta^*(G) \leq z_1(G).
\end{align*}
\end{prop}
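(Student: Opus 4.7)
My plan is to prove both inequalities by feasible-region inclusion between SDPs that share the same objective function $\mathds{1}_n^Tx$. For this purpose, I would adopt the formulation of $\vartheta^*(G)$ indicated in the paragraph just above the statement, namely the SDP~\eqref{theta_1} with the additional constraint $X \geq 0$ (this equivalence was pointed out using the Galli--Letchford argument).

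For the right-hand inequality $\vartheta^*(G) \leq z_1(G)$, I would simply note that $z_1(G) = \vartheta(G)$ by~\eqref{ineq:relationship_z} and that the feasible region of the SDP defining $\vartheta^*(G)$ is contained in that of~\eqref{theta_1}, since $\vartheta^*(G)$ adds the extra constraint $X \geq 0$. Hence the two maxima have the same objective but one is over a smaller set, yielding $\vartheta^*(G) \leq \vartheta(G) = z_1(G)$.

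For the left-hand inequality $z_2(G) \leq \vartheta^*(G)$, I would take an arbitrary feasible $(x,X)$ for the SDP defining $z_2(G)$ and show that it is feasible for $\vartheta^*(G)$'s SDP. All constraints of~\eqref{theta_1} already hold, so only the non-negativity $X \geq 0$ must be verified. The diagonal entries satisfy $X_{ii} = x_i$ and Schur's complement applied to $\begin{pmatrix}X & x\\ x^T & 1\end{pmatrix}\succeq 0$ gives $X_{ii} \geq x_i^2 \geq 0$. For an off-diagonal entry $X_{ij}$ with $\{i,j\}\in E$, the edge constraint forces $X_{ij} = 0$. For $\{i,j\}\notin E$, the ESC for the two-vertex subgraph $G_I$ with $I=\{i,j\}$ is imposed in $z_2(G)$, and inequality~\eqref{2_1} reads exactly $X_{ij}\geq 0$. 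Hence $X \geq 0$, so $(x,X)$ is feasible for $\vartheta^*(G)$, and since the two SDPs share the same objective we conclude $z_2(G) \leq \vartheta^*(G)$.

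There is essentially no technical obstacle here: the proof reduces to the observation that the first inequality in the polyhedral description of $\STAB^{2}(G_I)$ for $|I|=2$ coincides with the only piece of non-negativity that is not already forced by the PSD constraint and the edge constraints. So the main point is simply to identify which constraint in~\eqref{ESC_2} supplies the missing piece.
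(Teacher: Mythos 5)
Your proof is correct and follows essentially the same route as the paper: the right inequality via $\vartheta^*(G)\leq\vartheta(G)=z_1(G)$, and the left inequality by observing that the level-$2$ ESCs already imply the non-negativity constraint $X\geq 0$ defining $\vartheta^*(G)$ (via~\eqref{2_1} for non-edges, the edge constraints, and positive semidefiniteness for the diagonal), so the feasible region for $z_2(G)$ is contained in that for $\vartheta^*(G)$. Your version merely makes the feasible-region inclusion explicit where the paper states it more tersely.
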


\begin{proof}
From the definition of the ESH, we know that $z_1(G) = \vartheta(G)$. Since $\vartheta^*(G) \leq \vartheta(G)$, it follows that $\vartheta^*(G) \leq z_1(G)$. Furthermore, Schrijver's refinement $\vartheta^*(G)$ is obtained by adding the non-negativity constraint $X \geq 0$ into any SDP \eqref{theta_1} or \eqref{theta_2} to compute the Lovász theta function $\vartheta(G)$. However, for subgraphs of order~$2$, the non-negativity constraint~\eqref{2_1} is only one of four inequalities describing the ESC~\eqref{ESC_2}. This implies that the bound on the second level of the ESH is at least as good as~$\vartheta^*(G)$. Therefore, $z_2(G) \leq \vartheta^*(G)$.
\end{proof}

Hence, Schrijver's refinement $\vartheta^*(G)$ lies between the first and second levels of the ESH. In the VTESH framework, Proposition~\ref{observation_Schrijver_leve_2} leads to the following relationship.

\begin{cor}\label{cor_1}
For any vertex-transitive graph $G$,
\begin{align*}
z^\prime_2(G) \leq 1 + \vartheta^*(G^L) \leq z^\prime_1(G).
\end{align*}
\end{cor}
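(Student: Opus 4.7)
The plan is to obtain Corollary~\ref{cor_1} as an immediate consequence of Proposition~\ref{observation_Schrijver_leve_2} applied to the local graph $G^L$ combined with the definition of the VTESH from~\eqref{k_th_level_LESH}. There is essentially no substantive obstacle here: the corollary is a one-line transfer of an inequality on $G^L$ into the $z'$-notation on $G$.

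First, I would invoke Proposition~\ref{observation_Schrijver_leve_2} with the graph $G^L$ in place of $G$. This gives the chain
\begin{align*}
z_2(G^L) \leq \vartheta^*(G^L) \leq z_1(G^L),
\end{align*}
which is valid since $G^L$ is an arbitrary graph (its vertex-transitivity, or lack thereof, plays no role in Proposition~\ref{observation_Schrijver_leve_2}).

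Next, I would add $1$ to every term of the chain. By the definition of the VTESH in~\eqref{k_th_level_LESH}, we have $z'_1(G) = 1 + z_1(G^L)$ and $z'_2(G) = 1 + z_2(G^L)$, so the displayed inequality becomes
\begin{align*}
z'_2(G) \;=\; 1 + z_2(G^L) \;\leq\; 1 + \vartheta^*(G^L) \;\leq\; 1 + z_1(G^L) \;=\; z'_1(G),
\end{align*}
which is exactly the statement of the corollary. Since every step is either a direct appeal to Proposition~\ref{observation_Schrijver_leve_2} or the definition~\eqref{k_th_level_LESH}, no further argument is required and the hardest part is simply keeping the notation straight between $G$ and $G^L$.
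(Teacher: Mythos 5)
Your proposal is correct and matches the paper's intent exactly: the paper presents Corollary~\ref{cor_1} as an immediate consequence of Proposition~\ref{observation_Schrijver_leve_2} applied to $G^L$, translated via the definition $z^\prime_k(G) = 1 + z_k(G^L)$ from~\eqref{k_th_level_LESH}. Nothing is missing.
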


Finally, based on Observation~\ref{ESH_level_0_1} and Corollary~\ref{cor_1}, we are now able to establish the relationship between the bound $b_M^*(P_q)$ and the VTESH for Paley graphs. 

\begin{cor}\label{cor_b_M^*}
Let $P_q$ be a Paley graph. Then,
\begin{align*}
z^\prime_2(P_q) \leq b_{M^*}(P_q) \leq b_{M}(P_q) = z^\prime_1(P_q).
\end{align*}
\end{cor}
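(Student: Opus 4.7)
The plan is to chain together the three previously established facts. Since Paley graphs are vertex-transitive, Corollary~\ref{cor_1} applies directly to $G = P_q$, giving
\begin{align*}
z^\prime_2(P_q) \leq 1 + \vartheta^*(P^L_q) \leq z^\prime_1(P_q).
\end{align*}
The middle quantity $1 + \vartheta^*(P^L_q)$ is precisely $b_{M^*}(P_q)$ by its definition (valid for all $q$, once the formulation from~\cite{Magsino} is reinterpreted via $P^L_q$ using Lemma~\ref{lem:GraphsIsomorphic} in the prime case and Observation~\ref{stability_local_graph} otherwise). This yields the first two inequalities of the claim.

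Next, I would invoke Observation~\ref{ESH_level_0_1}, which states $b_M(P_q) = z^\prime_1(P_q)$; this gives the final equality in the chain. For the remaining inequality $b_{M^*}(P_q) \leq b_M(P_q)$, I would simply note that $\vartheta^*(P^L_q) \leq \vartheta(P^L_q)$ (the Schrijver refinement is a restriction of the feasible region of the Lovász SDP by an additional non-negativity constraint), and then add $1$ to both sides.

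Assembling these three observations produces the full chain
\begin{align*}
z^\prime_2(P_q) \leq b_{M^*}(P_q) \leq b_{M}(P_q) = z^\prime_1(P_q),
\end{align*}
with no additional technical work required. There is no substantive obstacle here: the corollary is purely a repackaging of Observation~\ref{ESH_level_0_1} and Corollary~\ref{cor_1} together with the definition of $b_{M^*}(P_q)$ and the elementary inequality $\vartheta^*\leq \vartheta$.
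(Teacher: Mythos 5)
Your proof is correct and follows essentially the same route as the paper, which derives the corollary directly from Observation~\ref{ESH_level_0_1} and Corollary~\ref{cor_1} together with the definition $b_{M^*}(P_q) = 1 + \vartheta^*(P^L_q)$ and the elementary inequality $\vartheta^* \leq \vartheta$. No gaps.
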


Thus, the bound on the second level of the VTESH is at least as good as the bound $b_{M^*}(P_q)$. Naturally, we ask whether there exist Paley graphs $P_q$ for which the strict inequalities $z^\prime_2(P_q) < b_{M^*}(P_q)$ or $b_{M^*}(P_q) < b_{M}(P_q)$ hold. In either case, this would imply $z^\prime_2(P_q) < z^\prime_1(P_q)$, demonstrating that the VTESH provides better performance than the ESH for some Paley graphs, yielding tighter bounds on their stability numbers already at the second level of the hierarchy.

To answer this question, we refer to the computational results presented in~\cite{Magsino}, where the bounds $b_M(P_q)$ and $b_{M^*}(P_q)$ were computed and compared with $b_H(P_q)$, a closed-form upper bound introduced in Section~\ref{Paley_graphs}, for~$P_q$ with~$q < 3000$. According to Table 1 in~\cite{Magsino}, for 63 Paley graphs~$P_q$ with~$q~<~3000$, the inequality $b_{M^*}(P_q) < b_{M}(P_q)$ holds. Together with Corollary~\ref{cor_b_M^*}, we draw the following conclusion.

\begin{obs}\label{LESH_makes_improvement}
There exist Paley graphs $P_q$ for which
\begin{align*}
z^\prime_2(P_q) < z^\prime_1(P_q).
\end{align*}
\end{obs}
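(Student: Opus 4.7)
The statement is an existence claim, and the chain of inequalities needed to derive it is already assembled in Corollary~\ref{cor_b_M^*}, which gives
\[
z^\prime_2(P_q) \;\leq\; b_{M^*}(P_q) \;\leq\; b_{M}(P_q) \;=\; z^\prime_1(P_q)
\]
for every Paley graph $P_q$. So strictness of $z^\prime_2(P_q) < z^\prime_1(P_q)$ is implied as soon as we can point to even one $q$ for which \emph{either} $z^\prime_2(P_q) < b_{M^*}(P_q)$ \emph{or} $b_{M^*}(P_q) < b_{M}(P_q)$ is strict. My plan is to obtain the second strict inequality from the literature rather than to exhibit it ourselves.

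The key step is to invoke the computational table in Magsino, Mixon, and Parshall~\cite{Magsino}, which tabulates $b_{M}(P_q)$ and $b_{M^*}(P_q)$ for all primes $q < 3000$ with $q \equiv 1 \pmod 4$ and records $63$ values of $q$ for which $b_{M^*}(P_q) < b_{M}(P_q)$ strictly. Pick any such $q$ (the smallest one in their table is a natural choice to mention explicitly). For this $q$, Corollary~\ref{cor_b_M^*} gives
\[
z^\prime_2(P_q) \;\leq\; b_{M^*}(P_q) \;<\; b_{M}(P_q) \;=\; z^\prime_1(P_q),
\]
which is exactly the required strict inequality.

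Since the statement is phrased as an \emph{observation}, no further work is needed: the content is a direct combination of the already-proved Corollary~\ref{cor_b_M^*} with an external computational fact. The only mild subtlety worth flagging, and the place one might worry about an obstacle, is checking that the $b_M$ and $b_{M^*}$ values reported in~\cite{Magsino} are computed under the same normalization as ours; but this is taken care of already in Section~\ref{LESH_Paley}, where we verified via Lemma~\ref{lem:GraphsIsomorphic} that $P_q^L$ and $\overline{L_q}$ are isomorphic for prime $q$, so the Schrijver bound $\vartheta^*(\overline{L_q})$ computed in~\cite{Magsino} equals $\vartheta^*(P_q^L)$ as used in our definition of $b_{M^*}(P_q)$. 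Hence no recomputation is required, and the observation is established.
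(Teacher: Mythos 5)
Your argument is correct and is exactly the paper's own: it combines Corollary~\ref{cor_b_M^*} with the fact from Table~1 of~\cite{Magsino} that $b_{M^*}(P_q) < b_{M}(P_q)$ holds for 63 Paley graphs with $q < 3000$, which immediately forces $z^\prime_2(P_q) < z^\prime_1(P_q)$ for those $q$. No differences worth noting.
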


This finding is noteworthy because, as stated in Lemma~\ref{level_2}, adding ESCs for subgraphs of order~$2$ in the ESH does not improve the Lovász theta function as bound on the stability number for all Paley graphs $P_q$. Due to Observation~\ref{LESH_makes_improvement} the VTESH does bring an improvement even on level~$2$ for some Paley graphs, so this hierarchy is significantly stronger.

\subsection{Computational study}\label{ESH_comp_local_Paley}

We now computationally employ the VTESH to compute upper bounds on the stability numbers of Paley graphs $P_q$. 
For this purpose, we revisit Paley graphs $P_q$ with $q < 200$ and compute upper bounds on $z^\prime_k(P_q)$ for $k \in \{2, \dots, 10\}$.
The main goal of this computational study is to compare the bounds on $z^\prime_k(P_q)$ and on $z_k(P_q)$, as well as to investigate the quality of the bounds on stability numbers of Paley graphs obtained by utilizing the VTESH by comparing $z^\prime_k(P_q)$ to $b_{H}(P_q)$, $b_{M}(P_q)$ and $b_{M^*}(P_q)$.
We use the same computational setup and code as in Section~\ref{comp_justification}.
We again
do not consider any $q$ that is a square because of Observation~\ref{theta_exact_q_square}.
Furthermore, as we only compute upper bounds on $z^\prime_k(P_q)$ and because of the same arguments as in Section~\ref{comp_justification}, it is again possible that for some values of $q$ the upper bounds on $z^\prime_k(P_q)$ increase for higher values of~$k$, even though the actual values of~$z^\prime_k(P_q)$ decrease.

The results of this computational experiment are presented in Table~\ref{Table_3}. We provide the values of $q$, $\alpha(P_q)$ and $\vartheta(P_q)$. Then, the bounds $b_{H}(P_q)$, $b_{M}(P_q)$, and~$b_{M^*}(P_q)$ are given. Finally, we present computed upper bounds on $z^\prime_k(P_q)$ for $k \in \{2, \dots, 10\}$. If $z^\prime_k(P_q)$ for some $k \in \{2, \dots, 10\}$ yields a better integer bound on $\alpha(P_q)$ than both bounds $b_{H}(P_q)$ and $b_{M^*}(P_q)$, the respective value is printed in bold.
Whenever we can deduce that the computed upper bound on $z^\prime_k(P_q)$ is larger than $z^\prime_k(P_q)$ (because for some $k'<k$ the computed upper bound on $z^\prime_{k'}(P_q)$ is lower than the computed upper bound on $z^\prime_{k}(P_q)$), the corresponding upper bound on $z_k(P_q)$ is printed in italic.
Furthermore, we recall that the bound $b_{H}(P_q)$ is valid only for $P_q$ where $q$ is a prime, and therefore we do not present the bound $b_{H}(P_q)$ for $q = 125$. 

\begin{table}[hbt!]
	\caption{Comparison of upper bounds on 
 $\alpha(P_q)$ 
 }
	\centering
    \begin{adjustbox}{max width=\textwidth}
	\begin{tabular}{ r r r r r r | r r r r r r r r r}
		$q$ & $\alpha(P_q)$ & $\vartheta(P_q)$ & $b_{H}(P_q)$ & $b_{M}(P_q)$ & $b_{M^*}(P_q)$ & $z^\prime_2(P_q)$ & $z^\prime_3(P_q)$ & $z^\prime_4(P_q)$ & $z^\prime_5(P_q)$ & $z^\prime_6(P_q)$ & $z^\prime_7(P_q)$ & $z^\prime_8(P_q)$ & $z^\prime_9(P_q)$ & $z^\prime_{10}(P_q)$ \\
		\hline
        5 & 2 & 2.2361 & 2.0000 & 2.0000 & 2.0000 & 2.0000 & 2.0000 & 2.0000 & 2.0000 & - & - & - & - & - \\
        13 & 3 & 3.6056 & 3.0000 & 3.0000 & 3.0000 & 3.0000 & 3.0000 & 3.0000 & 3.0000 & 3.0000 & 3.0000 & 3.0000 & 3.0000 & 3.0000 \\
        17 & 3 & 4.1231 & 3.3723 & 3.3431 & 3.3431 & 3.3431 & 3.2928 & 3.0000 & 3.0000 & 3.0000 & 3.0000 & 3.0000 & 3.0000 & 3.0000 \\
        29 & 4 & 5.3852 & 4.2749 & 4.3177 & 4.3177 & 4.3177 & 4.3177 & 4.0000 & 4.0000 & 4.0000 & 4.0000 & 4.0000 & 4.0000 & 4.0000 \\
        37 & 4 & 6.0828 & 4.7720 & 4.7599 & 4.7599 & 4.7599 & 4.7599 & 4.5264 & 4.0000 & 4.0000 & 4.0000 & 4.0000 & \textit{4.0014} & \textit{4.0188} \\
        41 & 5 & 6.4031 & 5.0000 & 5.4721 & 5.4721 & 5.4721 & 5.3818 & 5.0000 & 5.0000 & 5.0000 & 5.0000 & 5.0000 & 5.0000 & 5.0000 \\
        53 & 5 & 7.2801 & 5.6235 & 5.6783 & 5.6783 & 5.6783 & 5.6761 & 5.5983 & 5.0634 & 5.0065 & 5.0004 & 5.0000 & \textit{5.0116} & \textit{5.0653} \\
		61 & 5 & 7.8102 & 6.0000 & 5.9009 & 5.8886 & 5.8886 & 5.8886 & 5.8474 & 5.5173 & 5.4913 & 5.4384 & 5.1922 & \textit{5.2066} & \textit{5.2101} \\
		73 & 5 & 8.5540 & 6.5208 & 6.3772 & 6.3772 & 6.3772 & 6.3772 & 6.3373 & 6.1044 &  \textbf{5.9488} & \textbf{5.6720} & \textbf{5.6263} & \textbf{5.6076} & \textbf{\textit{5.6081}} \\
		89 & 5 & 9.4340 & 7.1521 & 7.1553 & 7.0600 & 7.0599 & 7.0598 & 7.0403 & \textbf{6.8056} & \textbf{6.7391} &  \textbf{6.2342} &  \textbf{6.2320} & \textbf{6.2179} & \textbf{6.1961} \\
		97 & 6 & 9.8489 & 7.4462 & 7.9483 & 7.9451 & 7.9451 & 7.9449 & 7.8032 & 7.4022 & \textit{7.4942} & 7.0127 & 7.0085 & \textit{7.0116} & \textit{7.0136} \\
		101 & 5 & 10.0499 & 7.5887 & 7.2903 & 7.2891 & 7.2891 & 7.2891 & 7.2891 & 7.1738 & 7.0916 &  \textbf{6.7396} & \textbf{6.7373} & \textbf{6.6830} & \textbf{6.6628} \\
		109 & 6 & 10.4403 & 7.8655 & 8.0070 & 8.0018 & 8.0018 & 8.0017 & 7.9916 & 7.6901 & \textit{7.8152} & 7.3638 & 7.2925 & \textit{7.3116} & \textit{7.3178} \\
		113 & 7 & 10.6301 & 8.0000 & 8.3305 & 8.3305 & 8.3305 & 8.3305 & 8.3183 & 8.0452 & 8.0425 &  \textbf{7.4031} & \textbf{7.3836} &  \textbf{\textit{7.4326}} & \textbf{\textit{7.4408}} \\
        125 & 7 & 11.1803 & - & 8.5700 & 8.5700 & 8.5700 & 8.5700 & 8.4404 & 8.3771 & \textit{8.3924} & \textbf{7.2679} & \textbf{\textit{7.4508}} & \textbf{\textit{7.8357}} & \textit{8.0555} \\
        137 & 7 & 11.7047 & {8.7614} & 8.8290 & 8.8261 & 8.8261 & 8.8259 & 8.8145 & 8.7179 & \textit{8.7227} & \textbf{7.9157} & \textbf{7.9101} & \textit{8.0265} & \textit{8.0564} \\
        149 & 7 & 12.2066 & 9.1168 & 9.1884 & 9.1884 & 9.1884 & 9.1884 & 9.1884 & 9.1380 & 9.0119 &  \textbf{8.1717} &  \textbf{\textit{8.1820}} &\textbf{\textit{8.2875}} & \textbf{\textit{8.3928}} \\
        157 & 7 & 12.5300 & 9.3459 & 9.6949 & 9.6704 & 9.6699 & 9.6692 & 9.6692 & 9.4742 & \textit{9.4869} &  \textbf{8.8095} &   \textbf{8.7167} &  \textbf{\textit{8.8073}} & \textbf{\textit{8.9108}} \\
        173 & 8 & 13.1529 & 9.7871 & 10.3165 & 10.2339 & 10.2336 & 10.2331 & 10.2294 & 10.0904 & \textit{10.1463} & 9.3973 & 9.3308 & \textit{9.4518} & \textit{9.5914} \\
        181 & 7 & 13.4563 & 10.0000 & 10.3241 & 10.3207 & 10.3205 & 10.3203 & 10.3203 & 10.2424 & 10.1963 & \textbf{9.4891} &  \textbf{9.3425} & \textbf{\textit{9.5454}} & \textbf{\textit{9.6373}} \\
        193 & 7 & 13.8924 & 10.3107 & 10.5058 & 10.4379 & 10.4370 & 10.4360 & 10.4359 & 10.3395 & 10.2916 & \textbf{9.7385} & \textbf{9.5777} & \textbf{\textit{9.7065}} & \textbf{\textit{9.8309}} \\
        197 & 8 & 14.0357 & 10.4121 & 10.6517 & 10.6517 & 10.6517 & 10.6517 & 10.6500 & 10.5206 & \textit{10.5547} & \textbf{9.8377} & \textbf{9.7119} & \textbf{\textit{9.8523}} & \textit{10.0225} \\        
		\hline
	\end{tabular}
    \end{adjustbox}
	\label{Table_3} 
\end{table}

First, we examine whether applying the VTESH on Paley graphs yields better bounds on $\alpha(P_q)$ than $b_{H}(P_q)$, $b_{M}(P_q)$ and~$b_{M^*}(P_q)$, and if so, whether we also obtain better integer bounds on $\alpha(P_q)$ for the considered instances.
We note in Table~\ref{Table_3} that for $q \in \{5, 13\}$, the bounds $b_{H}(P_q)$, $b_{M}(P_q)$ and~$b_{M^*}(P_q)$ coincide with $\alpha(P_q)$, as does the bound on $z^\prime_2(P_q)$. Thus, the bounds~$z^\prime_2(P_q)$, $b_{H}(P_q)$, $b_{M}(P_q)$ and~$b_{M^*}(P_q)$ are of the same quality.  

Nevertheless, for all other considered instances, we observe that the best computed bounds on $z^\prime_k(P_q)$ are significantly better than the bounds $b_{H}(P_q)$, $b_{M}(P_q)$ and~$b_{M^*}(P_q)$. In particular, for $q \in \{17, 29, 37, 41, 53\}$ those bounds are best possible, as they coincide with $\alpha(P_q)$. Furthermore, for $11$ of $15$ considered graphs $P_q$ with $61\leq q < 200$, the best computed bounds on~$z^\prime_k(P_q)$ are better than the bounds~$b_{H}(P_q)$ and $b_{M^*}(P_q)$ by one integer. 
Thus, we see in our computational study that, in general, our bounds obtained with the VTESH are much better than the bounds proposed in~\cite{Hanson} and~\cite{Magsino}.

Second, we analyze the bounds on $z^\prime_2(P_q)$ and $b_{M^*}(P_q)$. 
Note that in our computational results for all values of $q$ we have that $z^\prime_2(P_q) \leq b_{M^*}(P_q)$, which is in accordance with the theory from Corollary~\ref{cor_b_M^*}.
We observe that for $5$ of $22$ considered instances, i.e.\ for $q \in \{89, 157, 173, 181, 193\}$, the strict inequality $z^\prime_2(P_q) < b_{M^*}(P_q)$ holds. 
Hence, empirical data show that there exist Paley graphs for which adding not only non-negativity constraints as it is done in $b_{M^*}(P_q)$, but also the inequalities \eqref{2_2}-\eqref{2_4} into the bound as it is done in $z^\prime_2(P_q)$ improves the upper bound on~$\alpha(P_q)$. Nevertheless, we note that this improvement was rather modest, and the best improvement of $0.0009$ was obtained for $P_q$ with $q = 193$. 

Finally, and most importantly, we compare the bounds on~$z_k(P_q)$ in Table~\ref{Table_2} and~$z^\prime_k(P_q)$ in Table~\ref{Table_3} and analyze whether the VTESH yields better (integer) bounds on $\alpha(P_q)$ than the ESH. Here we note that for all considered~$P_q$ with $q \geq 17$ and for all $k \in \{2, \ldots, 10\}$ the relation
$$ \lfloor z^\prime_k(P_q) \rfloor \leq \lfloor z_k(P_q) \rfloor - 1$$
holds, so on all levels $k \geq 2$, the bounds from the VTESH are at least one integer better than the bounds based on the ESH.

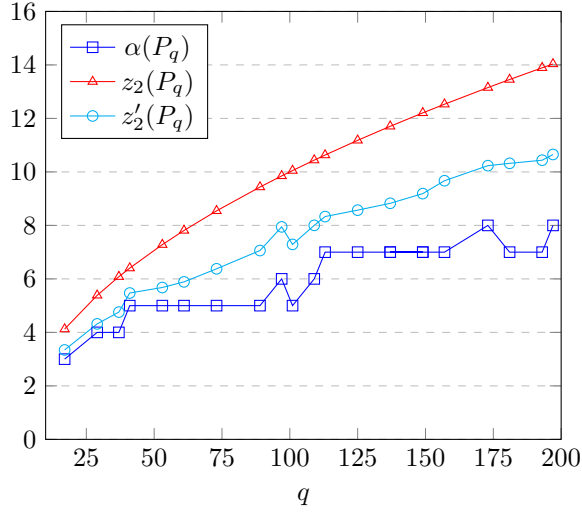
\begin{figure}[hbt!]
	\centering 
 \caption{The upper bounds on $z_2(P_q)$ and $z^\prime_2(P_q)$ for Paley graphs $P_q$ with $17 \leq q < 200$} 
\begin{tikzpicture}[scale=1.0]
			\begin{axis}[
				xlabel={$q$},
				xmin=10, xmax=200,
				ymin=0, ymax=16,
				xtick={25, 50, 75, 100, 125, 150, 175, 200},
				ytick={0, 2,  4, 6, 8, 10, 12, 14, 16},
				legend pos=north west,
				ymajorgrids=true,
				grid style=dashed,
				]			
                \addplot[
				color=blue,
				mark=square,
				]
				coordinates {
					(17,3) (29, 4) (37, 4) (41, 5) (53, 5) (61, 5) (73, 5) (89, 5) (97, 6) (101, 5) (109, 6) (113, 7) (125, 7) (137,7) (149, 7) (137,7) (149, 7) (157, 7) (173, 8) (181, 7) (193, 7) (197, 8)
				};
                \addlegendentry{$\alpha(P_q)$}
				\addplot[
				color=red,
				mark=triangle,
				]
				coordinates {
					(17, 4.1231) (29, 5.3852) (37, 6.0828) (41, 6.4031) (53, 7.2801) (61, 7.8102) (73, 8.5440) (89, 9.4340) (97, 9.8499) (101, 10.0499) (109, 10.4403) (113, 10.6301) (125, 11.1803) (137,11.7047) (149, 12.2066) (157, 12.5300) (173, 13.1529) (181, 13.4536) (193, 13.8924) (197, 14.0357)
				};
            \addlegendentry{$z_2(P_q)$}
            \addplot[
				color=cyan,
				mark=o,
				]
				coordinates {
					(17, 3.3431) (29, 4.3177) (37, 4.7599) (41, 5.4721) (53, 5.6783) (61, 5.8886) (73, 6.3772) (89, 7.0599) (97, 7.9451) (101, 7.2891) (109, 8.0018) (113, 8.3305) (125, 8.5700) (137, 8.8261) (149, 9.1884) (157, 9.6699) (173, 10.2336) (181, 10.3205) (193, 10.4370) (197, 10.6517)
				};
            \addlegendentry{$z^\prime_2(P_q)$}
			\end{axis}
		\end{tikzpicture}
	\label{figure_2a}
\end{figure}

In Figure~\ref{figure_2a}, we examine the bounds on $\alpha(P_q)$ from the second level of the ESH and the VTESH in more detail. We see that for $q \in \{17, 29, 41\}$, the bound $z^\prime_2(P_q)$ is better than the bound~$z_2(P_q)$ by one integer, while improvement by two integers is observed for $q \in \{37, 53, 61, 73, 89, 97, 109, 113\}$. Furthermore, for~$q \in \{101, 125, 137, 149, 157, 173, 181, 193\}$, an improvement of the bound by three integers is recorded. The best improvement in bound by four integers is observed for~$q = 197$. This shows that already on the second level, the VTESH clearly outperforms the ESH significantly, and the bound on $z^\prime_2(P_q)$ is better than the bound on~$z_2(P_q)$ by at least one integer for all considered Paley graphs.
This is not a surprise, as we know from Lemma~\ref{level_2} that $z_2(P_q) = \vartheta(P_q)$ for all $P_q$, so the bounds from the second level of the ESH do not improve on $\vartheta(P_q)$ as bound on $\alpha(P_q)$.

\begin{figure}[hbt!]
	\centering 
 \caption{The upper bounds on $z_8(P_q)$ and $z^\prime_8(P_q)$ for Paley graphs $P_q$ with $17 \leq q < 200$} 
\begin{tikzpicture}[scale=1.0]
			\begin{axis}[
				xlabel={$q$},
				xmin=10, xmax=200,
				ymin=0, ymax=16,
				xtick={25, 50, 75, 100, 125, 150, 175, 200},
				ytick={0, 2,  4, 6, 8, 10, 12, 14, 16},
				legend pos=north west,
				ymajorgrids=true,
				grid style=dashed,
				]			
                \addplot[
				color=blue,
				mark=square,
				]
				coordinates {
					(17,3) (29, 4) (37, 4) (41, 5) (53, 5) (61, 5) (73, 5) (89, 5) (97, 6) (101, 5) (109, 6) (113, 7) (125, 7) (137,7) (149, 7) (137,7) (149, 7) (157, 7) (173, 8) (181, 7) (193, 7) (197, 8)
				};
                \addlegendentry{$\alpha(P_q)$}
				\addplot[
				color=red,
				mark=triangle,
				]
				coordinates {
					(17,3.2864) (29, 4.4966) (37, 5.4935) (41, 5.9925) (53, 6.1915) (61, 6.9963) (73, 8.1910) (89, 9.4340) (97, 9.0777) (101, 10.0499) (109, 10.0652) (113, 10.3732) (125, 10.6415) (137,11.2181) (149, 11.9771) (157, 12.4315) (173, 13.1529) (181, 13.4536) (193, 13.8924) (197, 14.0357)
				};
            \addlegendentry{$z_8(P_q)$}
            \addplot[
				color=cyan,
				mark=o,
				]
				coordinates {
					(17,3) (29, 4) (37, 4) (41, 5) (53, 5) (61, 5.1922) (73, 5.6293) (89, 6.2320) (97, 7.0085) (101, 6.7373) (109, 7.2925) (113, 7.3836) (125, 7.4508) (137,7.9101) (149, 8.1820) (157, 8.7167) (173, 9.3308) (181, 9.3425) (193, 9.5777) (197, 9.7119)
				};
            \addlegendentry{$z^\prime_8(P_q)$}
			\end{axis}
		\end{tikzpicture}
	\label{figure_2}
\end{figure}
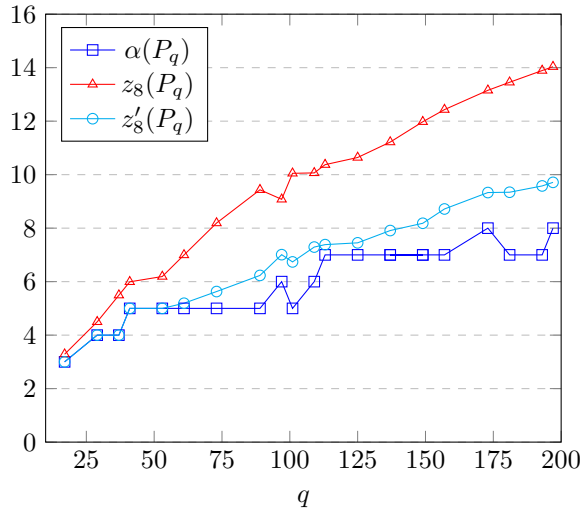  

When we consider higher levels of the VTESH, the results are even better. 
 Exemplary, we compare our computed upper bounds on $z_8(P_q)$ and~$z^\prime_8(P_q)$ in Figure~\ref{figure_2}. For $q \in \{17, 29, 37, 41, 53\}$, the bounds on $z^\prime_8(P_q)$ coincide with $\alpha(P_q)$, whereas for all other considered $P_q$, the bounds on $z^\prime_8(P_q)$ surpass the bounds on~$z_8(P_q)$ by at least one integer. Specifically, for the graph $P_q$ with $q = 61$, the bound on~$z^\prime_8(P_q)$ is better than the bound on~$z_8(P_q)$ by one integer. Furthermore, for $q = 97$, $q \in \{73, 89, 109, 113, 125, 149\}$ and $q \in \{101, 137, 157, 173, 181, 193\}$ improvements of bounds by two, three and four integers are recorded, respectively. Finally, the best improvement in bound by five integers is observed for $q = 197$. Thus, using the VTESH for computing upper bounds on $\alpha(P_q)$ produces significantly better bounds than the ESH, highlighting the strong potential of the VTESH for computing upper bounds on the stability number of Paley graphs.

However, note that our bounds obtained from the VTESH are not the best bounds on the stability number of Paley graphs known in literature. Indeed, our best bounds are weaker than the best bounds from the block-diagonal hierarchy in~\cite{Gvozdenovic2009}.
So while the VTESH substantially improves upon the ESH, it remains an open research question whether computationally trackable higher levels or further refinements of the VTESH could yield bounds as strong as those in~\cite{Gvozdenovic2009}.

\section{Conclusions and open questions}\label{Conclusion}

This work provides an in-depth analysis of ESC-based approaches for the computation of upper bounds on the stability numbers of Paley graphs. First, we examined these bounds by employing the ESH, which starts with the Lovász theta function $\vartheta$ on the first level and includes ESCs for all subgraphs of order~$k$ on level~$k$. In particular, we showed that for every Paley graph~$P_q$, there exists a level~$\ell(q) = \left\lfloor\frac{\sqrt{q}+3}{2}\right\rfloor$ for which adding ESCs on subgraphs of orders $\{2, \ldots, \ell(q)\}$ fails to provide better bounds on the stability number $\alpha(P_q)$ than the Lovász theta function. Moreover, we showed that for certain Paley graphs $P_q$ there is also no improvement of the bound on the level~$\ell(q)+1$ of the ESH.

To overcome this stagnation of the bounds, we introduced the VTESH for the stable set problem for vertex-transitive graphs, including Paley graphs. We proved that the bounds obtained by the VTESH are at least as good as the ones obtained from the ESH. A computational study for Paley graphs showed that the VTESH yields superior results compared to the ESH, often improving the bounds by several integers.

Several questions remain open. First, we established in Corollary~\ref{cor_level} that there is no improvement of the bound from the ESH up to level~$\ell(q)$ for all Paley graphs. Additionally, Theorem~\ref{thm_alpha_levelPlus1} shows no improvement on level~$\ell(q)+1$ if $\alpha(P_q) < \ell(q)$ and $q\geq 25$. However, it remains unclear whether there is an improvement of the 
bound on the ESH on level~$\ell(q)+1$ for all values of $q$ for which  
$\alpha(P_q) \geq \ell(q)$ holds, as our computations suggest for $q \leq 200$. 

For graphs with $\alpha(P_q) < \ell(q)$, it remains to be 
proven  whether no improvement occurs also on level~$\ell(q)+2$ for $q \in \{89,101,181,193\}$, and even on level~$\ell(q)+3$ for $q = 101$,  as indicated by our computations. Furthermore, it would be interesting to determine whether there is no improvement up to the level~$\ell(q) + 2$ for all $q$  satisfying  $\alpha(P_q) < \ell(q)$.

From the results for $P_q$ with $q < 200$ presented in Table~\ref{Table_2}, we observed that $\ell(q)$ typically lies in $\{\alpha(P_q) - 1, \alpha(P_q), \alpha(P_q) + 1\}$. It would be interesting to determine whether this holds for all values of~$q$, or to find a counterexample. A positive answer to this question would imply a new bound on $\alpha(P_q)$ and would thus be a remarkable result.

Another question concerns the computational implementation of the ESH and the VTESH for Paley graphs of prime order. In particular, it would be useful to investigate  whether the underlying SDP can be reduced to a linear program, as this is possible for the Lovász theta function and Schrijver's refinement, due to their circulant nature.

Future research could also perform a rigorous theoretical and computational comparison of the VTESH for Paley graphs with other hierarchical approaches like the block-diagonal SDP hierarchy introduced in~\cite{Gvozdenovic2009}, which might lead to new insights, enhanced hierarchies and tighter bounds.

Finally, it would be interesting to investigate bounds obtained by the VTESH for other classes of vertex-transitive graphs. Moreover, given that the Lovász theta function provides bounds for both the stability number and the chromatic number, it would be interesting to investigate the ESH for the graph coloring problem for Paley graphs.

\section*{Acknowledgments} We are grateful to the anonymous reviewers for their valuable comments and suggestions, which helped to improve the clarity and quality of this paper.

\section*{Funding} This research was funded in part by the Austrian Science Fund (FWF) [10.55776/DOC78]
and by the Johannes Kepler University Linz, Linz Institute of Technology (LIT):
LIT-2021-10-YOU-216.
For the purposes of open access, the authors have applied a CC BY public copyright license to all author-accepted manuscript versions resulting from this submission.

\bibliographystyle{plain}
\bibliography{Paley_bib}

\end{document}